\documentclass[13pt,a4paper, oneside]{amsart}       
\usepackage{fullpage}
\usepackage{CJK}
\usepackage{amssymb}
\usepackage{graphicx}
\allowdisplaybreaks[4]
\usepackage{mathrsfs}
\usepackage{amscd}
\usepackage[all]{xy}           
\usepackage{amsfonts,latexsym}
\usepackage[dvipsnames]{xcolor}
\usepackage{amsthm}
\usepackage{ytableau}
\usepackage{tikz}
\usepackage{ifpdf}\ifpdf
\usepackage[colorlinks=true,citecolor=blue,final,backref=page,hyperindex]{hyperref}\else
\usepackage[colorlinks=true,citecolor=blue,final,backref=page,hyperindex,hypertex]{hyperref}\fi
\xyoption{all}
\allowdisplaybreaks

\newenvironment{Young}{\begingroup
       \def\vr{\vrule height0.8\hoogte width\dikte depth 0.2\hoogte}
       \def\fbox##1{\vbox{\offinterlineskip
                    \hrule height\dikte
                    \hbox to \breedte{\vr\hfill$##1$\hfill\vr}
                    \hrule height\dikte}}
       \vtop\bgroup \offinterlineskip \tabskip=-\dikte \lineskip=-\dikte
            \halign\bgroup &\fbox{##\unskip}\unskip  \crcr}
     {\egroup\egroup\endgroup}

\def\diagram(#1){\,\vcenter{\begin{Young}#1\cr\end{Young}}}

\fontsize{9pt}{9pt}\selectfont

\def\bgl{\boldsymbol{\lambda}}

\def\Number#1{\refstepcounter{equation}
              \leqno(\theequation)\if*#1%
              \else\def\@currentlabel{{\rm\theequation}}\label{#1}%
              \fi}

\def\lexp#1#2{\kern\scriptspace\vphantom{#2}^{#1}\kern-\scriptspace#2}

\def\@underbar#1#2{\settowidth{\@tempdimb}{$#1#2$}\@tempdimb=0.8\@tempdimb
                   \ooalign{$#1#2$\crcr%
                         \hfil\rule[-.5mm]{\@tempdimb}{.4pt}\hfil}}
\newdimen\hoogte    \hoogte=14pt    
\newdimen\breedte   \breedte=14pt   
\newdimen\dikte     \dikte=0.5pt    
  {\ifx*#2\let\pointlabel\relax\else\def\pointlabel{#2}\fi
   \refstepcounter{equation}\trivlist
   \item[\hskip\labelsep\theequation.
         \ifx\pointlabel\relax\else\pointlabel\fi]
   \ignorespaces #1
  }{\relax}

\numberwithin{equation}{section}
\swapnumbers
\newtheorem{theorem}[equation]{Theorem}
\newtheorem{lemma}[equation]{Lemma}
\newtheorem{proposition}[equation]{Proposition}
\newtheorem{corollary}[equation]{Corollary}

\theoremstyle{definition}
\newtheorem{definition}[equation]{Definition}
\newtheorem{example}[equation]{Example}
\theoremstyle{remark}
\newtheorem{remark}[equation]{Remark}

\begin{document}
\begin{CJK*}{GBK}{song}
\title[Remarks on shuffles]{Remarks on the shuffle elements of Iwahori--Hecke algebras}
%
\author{Deke Zhao}
\address{\bigskip\hfil\begin{tabular}{l@{}}
          Department of  Mathematics\\
            Beijing Normal University at Zhuhai, Zhuhai, China\\
                         E-mail: \it deke@amss.ac.cn \hfill
          \end{tabular}}
\thanks{Supported partly by Guangdong Basic and Applied Basic Research
 (2023A1515010251) and the National Natural Science Foundation of China (Grant No.  11871107)}
\begin{abstract}Let $H_n(q)$ be the generic Iwahori--Hecke algebra associated to the symmetric group $\mathfrak{S}_n$ of degree $n$ and let $\mathscr{Y}_{n,i}$ be the sum of standard basis of $H_n(q)$ with Coxeter length $i$ for $1\leq i\leq \binom{n}{2}$. We show that $\mathscr{Y}_{n,1}$ and $\mathscr{Y}_{n,2}$ are noncommutative whenever $n\geq 4$, which disproves Doikou's Conjecture on the commutativity of the shuffle elements of $H_n(q)$ in (Nuclear Physics B 1029 (2026): 117532). We also show that the matrix
of the operator of the left multiplication by $\mathscr{Y}_{n,i}$ in $H_n(q)$ is $q$-symmetric for all $i$.
\end{abstract}

\makeatletter
\@namedef{subjclassname@2020}{\textup{2020} Mathematics Subject Classification}
\makeatother
\subjclass[2020]{}
\keywords{}

\maketitle



\setcounter{section}{0}

\section{Introduction}
Let $n$ be a positive integer and $q$ an indeterminate. The (generic) Iwahori--Hecke algebra $H_n(q)$ associated to the $n$-th symmetric group $\mathfrak{S}_n$ is the unitary associative algebra over $\mathbb{C}(q)$, the field of rational functions in $q$, generated by $T_1, \ldots, T_{n-1}$ with relations
 \begin{align*}&T_i^2=(q-1)T_i+q &&\text{ for }1\leq i\leq n-1,\\
&T_iT_j=T_jT_i &&\text{ for }|i-j|\geq 2,\\
 &T_iT_{i+1}T_i=T_{i+1}T_{i}T_{i+1} &&\text{ for }1\leq i\leq n-2.\end{align*}
Let $w\in \mathfrak{S}_n$ and let $s_{i_1}s_{i_2}\cdots s_{i_k}$ be a reduced expression for $w$. Then $T_{w}:=T_{i_1}T_{i_2}\cdots T_{i_k}$ is independent of the choice of reduced expression and  $\{T_{w}|w\in \mathfrak{S}_n\}$ is $\mathbb{C}(q)$-linear basis of $H_n(q)$, which is called the standard basis of $H_n(q)$.

\begin{definition}\label{Def:Shuffle}
 For $z\in \mathbb{C}(q)$ and $i=1,\ldots, n-1$, set
\begin{eqnarray*}
\mathscr{X}_i(z)&:=&zT_i+z^2T_{i-1}T_{i}+\cdots+z^iT_1T_2\cdots T_i.
\end{eqnarray*}
The  \textit{shuffle element}  $\mathscr{S}_n$ for $H_n(q)$ is defined as follows: \begin{eqnarray*}
\mathscr{S}_n(z)&:=&\mathscr{X}_{n-1}(z)\mathscr{X}_{n-2}(z)\cdots\mathscr{X}_{1}(z).
\end{eqnarray*}\end{definition}

Now let $\ell$ be the  length function of $\mathfrak{S}_n$. It is easy to see that
\begin{eqnarray}\label{Equ:Shuffle=sum}
\mathscr{S}_n(z)&=&\sum_{k=0}^{\binom{n}{2}}z^k\mathscr{Y}_{n,k},
\end{eqnarray}
where $\mathscr{Y}_{n,k}=\sum_{w\in\mathfrak{S}_n, \ell(w)=k}T_{w}$ and $\mathscr{Y}_{n,0}=1$. For $1\leq i,j\leq \binom{n}{2}$, Doikou   \cite[Conjecture~5.7]{Doikou} conjectures that
 \begin{equation*}
 \mathscr{Y}_{n,i}\mathscr{Y}_{n,j}=\mathscr{Y}_{n,j}\mathscr{Y}_{n,i},
 \end{equation*} or equivalently,  for $z,z'\in \mathbb{C}(q)$,
\begin{equation*}
\mathscr{S}_n(z)\mathscr{S}_n(z')=\mathscr{S}_n(z')\mathscr{S}_n(z).
\end{equation*}
Sometime we also denote by $\mathscr{Y}_{n,i}$ the sum of elements of $\mathfrak{S}_n$ with length $i$ when it is clear from context.

The main result of this paper is following fact, which disproves Doikou's conjecture.

\begin{theorem}\label{Theorem:main}
 Assume that  $n\geq 4$.   Then there exist $1\leq i\neq j\leq \binom{n}{2}$ such that $\mathscr{Y}_{n,i}$ and $\mathscr{Y}_{n,j}$ are noncommutative, or equivalently, for $z,z'\in \mathbb{C}(q)$,
  $\mathscr{S}_n(z)$ and $\mathscr{S}_n(z')$ are noncommutative.
\end{theorem}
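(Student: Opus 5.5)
The plan is to show directly that $\mathscr{Y}_{n,1}\mathscr{Y}_{n,2}\neq\mathscr{Y}_{n,2}\mathscr{Y}_{n,1}$ for every $n\geq 4$, which gives the theorem with $i=1$, $j=2$. The equivalent formulation for $\mathscr{S}_n(z)$ then follows from the expansion (\ref{Equ:Shuffle=sum}): the statement implicitly means $z,z'$ are generic (for instance $z=0$ gives $\mathscr{S}_n(0)=1$, which commutes with everything). Treating $z,z'$ as independent indeterminates, the coefficient of $z^iz'^j$ in $\mathscr{S}_n(z)\mathscr{S}_n(z')-\mathscr{S}_n(z')\mathscr{S}_n(z)$ is $\mathscr{Y}_{n,i}\mathscr{Y}_{n,j}-\mathscr{Y}_{n,j}\mathscr{Y}_{n,i}$. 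A nonzero polynomial in $z,z'$ over the infinite field $\mathbb{C}(q)$ has a non-vanishing specialization, so suitable $z,z'\in\mathbb{C}(q)$ exist.

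The key tool is the multiplication rule in the standard basis. If $\ell(s_kw)=\ell(w)+1$ then $T_kT_w=T_{s_kw}$. If $\ell(s_kw)=\ell(w)-1$ then
\begin{equation*}
T_kT_w=(q-1)T_w+qT_{s_kw},
\end{equation*}
so only basis elements of length at most $\ell(w)$ occur. The analogous statements hold for right multiplication. Consequently, for $u$ with $\ell(u)=3$, the coefficient of $T_u$ in $\mathscr{Y}_{n,1}\mathscr{Y}_{n,2}=\sum_{k}\sum_{\ell(w)=2}T_kT_w$ equals the number of $k$ with $\ell(s_ku)<\ell(u)$, that is, the number of left descents of $u$: each such $k$ contributes exactly once, via $w=s_ku$, and no other product can produce $T_u$ since only length-increasing products reach length $3$. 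Likewise, the coefficient of $T_u$ in $\mathscr{Y}_{n,2}\mathscr{Y}_{n,1}$ equals the number of right descents of $u$.

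It therefore suffices to exhibit $u\in\mathfrak{S}_n$ of length $3$ whose numbers of left and right descents differ. I would take $u=s_1s_3s_2$, which exists since $n\geq 4$, and check the following.
\begin{itemize}
\item The expression $s_1s_3s_2$ is reduced, so $\ell(u)=3$. This can be read off from the one-line notation of $u$ by counting inversions.
\item The left descent set of $u$ is $\{s_1,s_3\}$: since $s_1s_3=s_3s_1$, both can be moved to the front. The right descent set is $\{s_2\}$.
\item No other $s_k$ is a descent on either side. For $k\geq 4$ this is clear because $s_k$ does not occur in a reduced word of $u$. Whether $s_2$ is a left descent, and whether $s_1$ or $s_3$ is a right descent, has to be checked on the permutation, by testing whether $u^{-1}(k)>u^{-1}(k+1)$ and $u(k)>u(k+1)$ respectively.
\end{itemize}
Then the coefficient of $T_u$ is $2$ in $\mathscr{Y}_{n,1}\mathscr{Y}_{n,2}$ and $1$ in $\mathscr{Y}_{n,2}\mathscr{Y}_{n,1}$, so the two products differ.

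The only point needing care, which I expect to be the main (though mild) obstacle, is the descent-set verification above, in particular confirming that $s_2$ is not a left descent and that $s_1,s_3$ are not right descents. This is a direct computation with the one-line notation of $u$, and it is independent of $n$ because $u$ fixes all letters beyond $4$.
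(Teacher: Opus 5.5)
Your proof is correct, and it takes a genuinely different and more economical route than the paper.

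The paper proves the full identity $\mathscr{Y}_{n,1}\mathscr{Y}_{n,2}-\mathscr{Y}_{n,2}\mathscr{Y}_{n,1}=\sum_{i=1}^{n-3}(T_{i+2}T_iT_{i+1}-T_{i+1}T_iT_{i+2})$ (Proposition~\ref{Prop:12-diff}) by induction on $n$. The base case $n=4$ is a hand computation. The inductive step uses the recursion of Lemma~\ref{Lemm:Recursion-formula} to write $\mathscr{Y}_{n+1,1}\mathscr{Y}_{n+1,2}$ as $\mathscr{Y}_{n,1}\mathscr{Y}_{n,2}+T_nT_{n-2}T_{n-1}$ plus $\theta$-invariant terms. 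It then applies the anti-automorphism $\theta$, which sends $\mathscr{Y}_{n+1,1}\mathscr{Y}_{n+1,2}$ to $\mathscr{Y}_{n+1,2}\mathscr{Y}_{n+1,1}$.

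You instead extract a single coefficient directly from the rule for $T_kT_w$. Write $D_L(u)$, $D_R(u)$ for the left and right descent sets. Then the length-$3$ part of $\mathscr{Y}_{n,1}\mathscr{Y}_{n,2}$ (resp.\ $\mathscr{Y}_{n,2}\mathscr{Y}_{n,1}$) is $\sum_{\ell(u)=3}|D_L(u)|T_u$ (resp.\ $\sum_{\ell(u)=3}|D_R(u)|T_u$). The checks you deferred do go through. With $(\sigma\tau)(x)=\sigma(\tau(x))$ one gets $u=[2,4,1,3]$, with three inversions and descent set $\{2\}$, and $u^{-1}=[3,1,4,2]$, with descent set $\{1,3\}$. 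The resulting coefficient $2-1=1$ of $T_u=T_3T_1T_2$ in the commutator matches the paper's $i=1$ term.

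Your argument needs no computation in $H_4(q)$, no induction, and no bookkeeping of $\theta$-invariant terms, and it is uniform in $n$. The paper gains the whole commutator, but your method recovers that too with little extra work:
\begin{itemize}
\item the length-$2$ and length-$1$ contributions cancel, since $|D_L(v)|=|D_R(v)|$ whenever $\ell(v)=2$;
\item a short case check of the length-$3$ elements leaves exactly $s_is_{i+2}s_{i+1}$ and $s_{i+1}s_is_{i+2}$.
\end{itemize}
It also generalizes: the top-length component of $\mathscr{Y}_{n,1}\mathscr{Y}_{n,j}-\mathscr{Y}_{n,j}\mathscr{Y}_{n,1}$ is $\sum_{\ell(u)=j+1}(|D_L(u)|-|D_R(u)|)T_u$.

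Your treatment of the $\mathscr{S}_n(z)$ reformulation is more careful than the paper's, which asserts the equivalence without argument. You read it as a statement about generic $z,z'$ and specialize a nonzero polynomial over the infinite field $\mathbb{C}(q)$.
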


Indeed, we prove that $\mathscr{Y}_{n,1}$ and $\mathscr{Y}_{n,2}$ are non-commutative when $n\geq 4$. It is interesting to know when $\mathscr{Y}_{n,i}$ and $\mathscr{Y}_{n,j}$ are commutative for $1\leq i\neq j\leq \binom{n}{2}$. We believe that $\mathscr{Y}_{n,i}$ and $\mathscr{Y}_{n,j}$ are noncommutative for almost all $i\neq j$ when $n$ is large enough (see Remark~\ref{Remark:Almost-all}).

Let $\theta: H_n(q)\longrightarrow H_n(q)$ be the map defined on the standard basis by the rule $\theta(T_w)=T_{w^{-1}}$ for $w\in \mathfrak{S}_n$, extended to $H_n(q)$ by linearity. It is well known that $\theta$ is an anti-automorphism  of $H_n(q)$ (see e.g. \cite{Dipper-James} and see \cite[Lemma~2.3]{Murphy} for a proof). Since $\ell(w^{-1})=\ell(w)$ for $w\in \mathfrak{S}_n$, we have
 \begin{eqnarray*}
\theta(\mathscr{Y}_{n,i})=\mathscr{Y}_{n,i} &\quad\text{ and }\quad& \theta(\mathscr{Y}_{n,i}\mathscr{Y}_{n,j})=
 \theta(\mathscr{Y}_{n,j})\theta(\mathscr{Y}_{n,i})=
 \mathscr{Y}_{n,j}\mathscr{Y}_{n,i}
 \end{eqnarray*}
for all $n\geq 2$ and $1\leq i,j\leq\binom{n}{2}$.
Therefore $\mathscr{Y}_{n,i}$ and $\mathscr{Y}_{n,j}$ are commutative if and only if $\mathscr{Y}_{n,i}\mathscr{Y}_{n,j}$ is $\theta$-invariant, while it is very hard to give a criterion on the $\theta$-invariant of $\mathscr{Y}_{n,i}\mathscr{Y}_{n,j}$ for $1\leq i\neq j\leq\binom{n}{2}$.

Now we can obtain an sufficient condition for the commutativity of $\mathscr{Y}_{n,i}$ and $\mathscr{Y}_{n,j}$, while it is unnecessary (see e.g.  $\mathscr{Y}_{4,1}$ and $\mathscr{Y}_{4,3}$ in Example~\ref{Exam:n=4}).
\begin{theorem}\label{Them:Comm}For $1\leq i\neq j\leq \binom{n}{2}$,  if $\mathscr{Y}_{n,i}\mathscr{Y}_{n,j}$ is a $\mathbb{Z}[q]$-linear combination of $\mathscr{Y}_{n,a}, \mathscr{Y}_{n,a+1}, \ldots, \mathscr{Y}_{n,b}$ with $|i-j|\leq a\leq b\leq\min\{i+j, \binom{n}{2}\}$, then $\mathscr{Y}_{n,i}$ and $\mathscr{Y}_{n,j}$ are commutative.
\end{theorem}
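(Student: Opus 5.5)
The argument will be a one-line consequence of the anti-automorphism $\theta$ introduced just before the statement. The interval $a,\dots,b$ and the requirement that the coefficients lie in $\mathbb{Z}[q]$ play no essential role beyond saying that the product lies in the span of the $\mathscr{Y}_{n,k}$; I expect the proof to use only that.

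\textbf{Step 1.} Write the hypothesis as
\[
\mathscr{Y}_{n,i}\mathscr{Y}_{n,j}=\sum_{k=a}^{b} c_k(q)\,\mathscr{Y}_{n,k}, \qquad c_k(q)\in\mathbb{Z}[q].
\]

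\textbf{Step 2.} Apply $\theta$ to both sides. The map $\theta$ is $\mathbb{C}(q)$-linear, since it is defined on the standard basis and extended linearly, so the scalars $c_k(q)$ pass through unchanged. Moreover $\theta(\mathscr{Y}_{n,k})=\mathscr{Y}_{n,k}$ for every $k$, because $w\mapsto w^{-1}$ permutes the elements of length $k$. Hence the right-hand side is $\theta$-invariant.

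\textbf{Step 3.} On the left-hand side, the displayed identity in the introduction gives $\theta(\mathscr{Y}_{n,i}\mathscr{Y}_{n,j})=\mathscr{Y}_{n,j}\mathscr{Y}_{n,i}$. Combining with Step 2,
\[
\mathscr{Y}_{n,j}\mathscr{Y}_{n,i}=\sum_{k=a}^{b}c_k(q)\,\mathscr{Y}_{n,k}=\mathscr{Y}_{n,i}\mathscr{Y}_{n,j},
\]
which is the claimed commutativity.

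The only point that needs checking is that $\theta$ is linear over the full coefficient ring rather than semilinear, so that it fixes $c_k(q)$. This holds by construction, since $\theta$ does not act on $q$. There is no real obstacle here; the range restriction $|i-j|\le a\le b\le\min\{i+j,\binom n2\}$ simply records where such an expansion could naturally be supported, and it is not needed for the argument.
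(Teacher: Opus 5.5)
Your argument is correct and matches the paper's reasoning: the paper gives no separate proof, but deduces the statement from the preceding observation that $\theta(\mathscr{Y}_{n,k})=\mathscr{Y}_{n,k}$ and $\theta(\mathscr{Y}_{n,i}\mathscr{Y}_{n,j})=\mathscr{Y}_{n,j}\mathscr{Y}_{n,i}$, so commutativity is equivalent to $\theta$-invariance of the product. You are also right that the $\mathbb{Z}[q]$-integrality and the range $|i-j|\le a\le b\le\min\{i+j,\binom{n}{2}\}$ are never used; any $\mathbb{C}(q)$-linear combination of the $\mathscr{Y}_{n,k}$ suffices.
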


Note that $\mathscr{X}_{n-1}(1)$ is the 1-shuffle element of $H_n(q)$ and Lusztig \cite{Lusztig} established that the spectrum of the operator of the left multiplication by $\mathscr{X}_{n-1}(1)$ in $H_n(q)$ consists of the $q$-numbers
 \begin{equation*}
[i]_q=1+q+\cdots+q^{i-1},\qquad i=0,1,\ldots,n-2,n.
\end{equation*}
Lusztig's work was inspired by Wallach's identity stated that the operator of the left multiplication by the 1-shuffle element in $\mathbb{C}\mathfrak{S}_n$ is diagonalizable with eigenvalues $0,1,2,\ldots, n-2,n$ (see \cite[Proposition~A.1]{Wallach}). Then Diaconis et al \cite{Diaconis-Fill-Pitman} (see also \cite{Phatarfod}) found that the multiplicity of the eigenvalue $i$ is equal to the number of permutations in $\mathfrak{S}_n$ with $i$ fixed points. Later, Isaev and Ogievetsky considered shuffle elements in the braid group ring $\mathbb{Z}B_n$ and constructed theirs additive and multiplicative analogues in Hecke and Birman--Murakami--Wenzl algebras via the Baxterized elements in \cite{Isaev-O}. In \cite{O-Petrova}, Ogievetsky and Petrova introduced the shuffles in the group algebra $\mathbb{Z}G(m,1,n)$ of the complex reflection group of type $G(m,1,n)$ and determined the spectrum and the multiplicities of the operator of the left multiplication by the shuffle element in $\mathbb{Z}G(m,1,n)$
by adopting Diaconis et al's approach \cite{Diaconis-Fill-Pitman}. A point should be noted that Ogievetsky and Petrova also introduced the shuffle elements in the cyclotomic Hecke algebra and proposed a conjecture on the spectrum of the operator of the left multiplication by the shuffle element in the Hecke algebra of type $B$ (see \cite[Conjecture~]{O-Petrova}), which is a
type $B$ analogue of Lusztig's \textit{loc. cit.}  work. 

Now for $i=1,\ldots,\binom{n}{2}$, let $\mathscr{M}^q_{n,i}=(a_{\alpha,\beta})_{\alpha,\beta\in\mathfrak{S}_n}$ (resp.  $\mathscr{M}_{n,i}$) be the matrix corresponding to the operator of the left multiplication by $\mathscr{Y}_{n,i}$ in $H_n(q)$ (resp. $\mathbb{C}\mathfrak{S}_n)$ with respect to the length-plus lexicographic order on the standard basis. Inspired by aforementioned works, we show that $\mathscr{M}^q_{n,i}$ are $q$-symmetric for all $i$, that is, 
\begin{equation*}
q^{\ell(\alpha)}a_{\alpha,\beta}=q^{\ell(\beta)}a_{\beta,\alpha} \text{ for $\alpha,\beta\in\mathfrak{S}_n$.}
\end{equation*}
In particular, $\mathscr{M}_{n,i}$ is symmetric, and so it is diagonalizable, which shows $\mathscr{M}_{n,i}^q$ also is diagonalizable  for all $i$, by noticing that $H_n(q)$ are $q$-deformation of $\mathbb{C}\mathfrak{S}_n$. Unfortunately, we can not determine completely the eigenvalues of $\mathscr{M}_{n,i}$ and $\mathscr{M}^q_{n,i}$ for all $i$.  Clearly, $\mathscr{Y}_{n,i}$ acts not just on $\mathbb{C}\mathfrak{S}_n$, but on any of its modules, i.e., on any representations of $\mathfrak{S}_n$. Thus we can consider the eigenvalues of the $\mathscr{Y}_{n,i}$-action on the Specht modules, and so the eigenvalues of $\mathscr{M}_{n,i}$ can be determined explicitly by the $\mathscr{Y}_{n,i}$-action on the Specht modules. Furthermore, for $i=1, \ldots, \binom{n}{2}$, the eigenvalues of $\mathscr{M}_{n,i}$ and $\mathscr{M}_{n,\binom{n}{2}-i}$ can be mutually determined (see the end of Section~\ref{Sec:q-symmetric}).

This paper is organized as follows. We begin with a recursion formula  for $\mathscr{Y}_{n,i}$ and some examples in Section~\ref{Sec:Recursion}. Theorem~\ref{Theorem:main} is proved in Section~\ref{Sec:Proof-Theorem} by establishing a recursion formula for the difference of $\mathscr{Y}_{n,1}\mathscr{Y}_{n,2}$  and $\mathscr{Y}_{n,1}\mathscr{Y}_{n,2}$. In Section~\ref{Sec:q-symmetric}, we prove the $q$-symmetry of the matrices corresponding to the operators of the left multiplication by $\mathscr{Y}_{n,2}$ in $H_n(q)$. In the last section we give a representation-interpretation on the eigenvalues of these matrices.

\section{A recursion formula for $\mathscr{Y}_{n,i}$}\label{Sec:Recursion}
Given an integer $i$, we set $\mathscr{Y}_{n,i}=0$ when $i\notin\{0,1,2,\ldots,\binom{n}{2}\}$. Definition~\ref{Def:Shuffle} and Eq.~\eqref{Equ:Shuffle=sum} show that
\begin{eqnarray*}
\mathscr{S}_{n+1}(z)&=&\sum_{i=0}^{\binom{n+1}{2}}z^i\mathscr{Y}_{n+1,i}\\
&=&(zT_n+z^2T_{n-1}T_{n}+\cdots+z^nT_1T_2\cdots T_n)\sum_{i=0}^{\binom{n}{2}}z^i\mathscr{Y}_{n,i}\\
&=&\sum_{i=0}^{\binom{n+1}{2}}z^i\biggl(\mathscr{Y}_{n,i}+
\sum_{j=1}^{n}T_{n+1-j}T_{n-j}\cdots T_n\mathscr{Y}_{n,i-j}
\biggr).
\end{eqnarray*}

Comparing the coefficients of $z^i$, we obtain the following recursion formula for $\mathscr{Y}_{n,i}$.

\begin{lemma}\label{Lemm:Recursion-formula}
For a positive integer $n$, we have
 \begin{equation*}
 \mathscr{Y}_{n+1,i}=\mathscr{Y}_{n,i}+
\sum_{j=1}^{n}T_{n+1-j}T_{n-j}\cdots T_n\mathscr{Y}_{n,i-j}.
 \end{equation*}
\end{lemma}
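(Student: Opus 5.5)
The plan is to prove the formula directly from the coset decomposition of $\mathfrak{S}_{n+1}$ over $\mathfrak{S}_n$. This argument also justifies Eq.~\eqref{Equ:Shuffle=sum} by induction on $n$. Afterwards we compare it with the generating-function computation displayed just before the lemma.

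\textbf{Coset representatives.} Regard $\mathfrak{S}_n\subset\mathfrak{S}_{n+1}$ as the parabolic subgroup generated by $s_1,\ldots,s_{n-1}$. For $0\leq j\leq n$ set $d_0=1$ and
\[
d_j=s_{n+1-j}s_{n+2-j}\cdots s_n .
\]
We check that $d_0,\ldots,d_n$ form the set of minimal length representatives of the left cosets $w\mathfrak{S}_n$. There are $n+1$ of them. They lie in distinct cosets, because $d_j(n+1)=n+1-j$. Each word $s_{n+1-j}\cdots s_n$ is reduced of length $j$, since it is a product of $j$ distinct simple reflections. Each $d_j$ is minimal in its coset, since $d_j s_k>d_j$ for every $k\leq n-1$; equivalently, $d_j$ is increasing on $\{1,\ldots,n\}$. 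The standard parabolic factorization then says that every $w\in\mathfrak{S}_{n+1}$ has a unique expression $w=d_jw'$ with $w'\in\mathfrak{S}_n$, and that
\[
\ell(w)=\ell(d_j)+\ell(w')=j+\ell(w').
\]

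\textbf{Passing to the Hecke algebra.} Since lengths add, concatenating a reduced word for $d_j$ with one for $w'$ gives a reduced word for $w$. Well-definedness of $T_w$ then gives
\[
T_w=T_{d_j}T_{w'}=T_{n+1-j}T_{n+2-j}\cdots T_n\,T_{w'} .
\]
Summing over all $w\in\mathfrak{S}_{n+1}$ with $\ell(w)=i$ corresponds bijectively to summing over pairs $(j,w')$ with $0\leq j\leq n$, $w'\in\mathfrak{S}_n$ and $\ell(w')=i-j$. Hence
\[
\mathscr{Y}_{n+1,i}=\mathscr{Y}_{n,i}+\sum_{j=1}^{n}T_{n+1-j}\cdots T_n\,\mathscr{Y}_{n,i-j}.
\]
Here the convention $\mathscr{Y}_{n,k}=0$ for $k\notin\{0,\ldots,\binom{n}{2}\}$ handles the terms with no contributing $w'$.

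\textbf{Consistency with the displayed computation.} The same identity gives $\mathscr{X}_n(z)$ as a generating function for the $T_{d_j}$, namely
\[
\mathscr{X}_n(z)=\sum_{j=1}^n z^jT_{d_j}.
\]
The factor $\mathscr{X}_n(z)$ in $\mathscr{S}_{n+1}(z)$ should be read as $1+\mathscr{X}_n(z)$, so that the $j=0$ term $\mathscr{Y}_{n,i}$ appears. With that reading, the computation before the lemma becomes exactly the statement above after comparing coefficients of $z^i$.

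The index bookkeeping in the product $T_{n+1-j}\cdots T_n$ needs care. The paper writes it with decreasing indices, but the reduced word for $d_j$ has increasing consecutive indices. I expect the main point needing care to be that $d_j$ is the minimal coset representative, i.e.\ that $\ell(d_jw')=j+\ell(w')$. I would justify this by counting inversions: $d_j$ moves $n+1$ to position $n+1-j$ and preserves the relative order of $1,\ldots,n$, so its inversions are exactly the $j$ pairs involving $n+1$, and these are disjoint from the inversions of $w'$.
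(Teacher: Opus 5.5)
Your proof is correct, but it takes a different route from the paper's. The paper never uses cosets. It takes Eq.~\eqref{Equ:Shuffle=sum} as known (``it is easy to see'') for both $n$ and $n+1$, writes $\mathscr{S}_{n+1}(z)=\mathscr{X}_n(z)\mathscr{S}_n(z)$ from Definition~\ref{Def:Shuffle}, expands, and compares coefficients of $z^i$.

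That is quicker, but all the content is deferred to Eq.~\eqref{Equ:Shuffle=sum}. Since the case $n=1$ is trivial, that equation holding for all $n$ is equivalent to the lemma holding for all $n$. Proving it needs exactly what you supply: the factorization $w=d_jw'$ over the minimal left coset representatives $d_j=s_{n+1-j}\cdots s_n$, with $\ell(w)=j+\ell(w')$, hence $T_w=T_{d_j}T_{w'}$.

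Your route is self-contained. Your inversion count already gives length additivity, and existence and uniqueness of $w=d_jw'$ follow from the $n+1$ distinct cosets, so you need not cite general parabolic theory. It also yields Eq.~\eqref{Equ:Shuffle=sum} by induction instead of presupposing it. The paper's route, once Eq.~\eqref{Equ:Shuffle=sum} is granted, has the merit of reading the recursion directly off the factorized shuffle element.

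Your two corrections are needed for the paper's computation to be literally valid:
\begin{enumerate}
\item Each factor must be $1+\mathscr{X}_i(z)$. Otherwise neither $\mathscr{Y}_{n,0}=1$ nor the summand $\mathscr{Y}_{n,i}$ in the lemma appears.
\item The product must be read as $T_{n+1-j}T_{n+2-j}\cdots T_n$, as Definition~\ref{Def:Shuffle} and Examples~\ref{Exam:n=4} and~\ref{Exam:n=5} confirm.
\end{enumerate}
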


\begin{example} By Lemma~\ref{Lemm:Recursion-formula}, we have
\begin{eqnarray*}
  \mathscr{Y}_{3,1}&=&\mathscr{Y}_{2,1}+T_2\mathscr{Y}_{2,0}=T_1+T_2,\\
 \mathscr{Y}_{3,2}&=&T_2\mathscr{Y}_{2,1}+T_1T_2\mathscr{Y}_{2,0}=T_2T_1+T_1T_2,\\
 \mathscr{Y}_{3,3}&=&T_1T_2\mathscr{Y}_{2,1}=T_1T_2T_1.
 \end{eqnarray*}
Calculation shows \begin{eqnarray*}
 \mathscr{Y}_{3,1}^2&=&2q\mathscr{Y}_{3,0}+(q-1)\mathscr{Y}_{3,1}+\mathscr{Y}_{3,2},\\
\mathscr{Y}_{3,2}^2&=&2q^2\mathscr{Y}_{3,0}+q(q-1)\mathscr{Y}_{3,1}
+q\mathscr{Y}_{3,2}+ 4(q-1)\mathscr{Y}_{3,3},\\
 \mathscr{Y}_{3,3}^2&=&q^3\mathscr{Y}_{3,0}+q^2(q-1)\mathscr{Y}_{3,1}+
 q(q-1)^2\mathscr{Y}_{3,2}+(q-1)(q^2-q+1)\mathscr{Y}_{3,3},\\
 \mathscr{Y}_{3,1}\mathscr{Y}_{3,2}&=&
 \mathscr{Y}_{3,1}+(q-1)\mathscr{Y}_{3,2}+2\mathscr{Y}_{3,3},\\
 \mathscr{Y}_{3,1}\mathscr{Y}_{3,3}&=&
 q\mathscr{Y}_{3,2}+2(q-1)\mathscr{Y}_{3,3},\\
 \mathscr{Y}_{3,2}\mathscr{Y}_{3,3}&=&
 q^2\mathscr{Y}_{3,1}+2q(q-1)\mathscr{Y}_{3,2}+2(q-1)^2\mathscr{Y}_{3,3}.
 \end{eqnarray*}
 Therefore $\mathscr{Y}_3(q)=\langle \mathscr{Y}_{3,i}|i=0,1,2,3 \rangle$ is a $4$-dimensional commutative subalgebra of $H_3(q)$.
 \end{example}

\begin{example}\label{Exam:n=4}By Lemma~\ref{Lemm:Recursion-formula}, we have
 \begin{eqnarray*}
 \mathscr{Y}_{4,1}&=&\mathscr{Y}_{3,1}+T_3=T_1+T_2+T_3,\\
\mathscr{Y}_{4,2}&=&\mathscr{Y}_{3,2}+T_3\mathscr{Y}_{3,1}+T_2T_3
=T_2T_1+T_1T_2+T_3T_1+T_3T_2+T_2T_3,\\
 \mathscr{Y}_{4,3}&=&\mathscr{Y}_{3,3}+T_3\mathscr{Y}_{3,2}
 +T_2T_3\mathscr{Y}_{3,1}+T_1T_2T_3=T_1T_2T_1+
 T_3T_2T_1+T_3T_1T_2+T_2T_3T_1+T_2T_3T_2+T_1T_2T_3,\\
 \mathscr{Y}_{4,4}&=&T_3\mathscr{Y}_{3,3}+T_2T_3\mathscr{Y}_{3,2}
 +T_1T_2T_3\mathscr{Y}_{3,1}=T_3T_1T_2T_1+T_2T_3T_2T_1+
 T_2T_3T_1T_2+T_1T_2T_3T_1+T_1T_2T_3T_2,\\
 \mathscr{Y}_{4,5}&=&T_2T_3\mathscr{Y}_{3,3}+T_1T_2T_3\mathscr{Y}_{3,2}
 =T_2T_3T_1T_2T_1+T_1T_2T_3T_2T_1+T_1T_2T_3T_2T_1,\\
\mathscr{Y}_{4,6}&=&T_1T_2T_3\mathscr{Y}_{3,3}=T_1T_2T_3T_1T_2T_1.
 \end{eqnarray*}
Calculation  shows that
\begin{eqnarray*}
\mathscr{Y}_{4,1}\mathscr{Y}_{4,2}&=&2q\mathscr{Y}_{4,1}+(q-1)\mathscr{Y}_{4,2}+
\mathscr{Y}_{4,3}+T_1T_2T_1+T_2T_3T_2+T_3T_1T_2+(q-1)T_1T_3;\\
\mathscr{Y}_{4,2}\mathscr{Y}_{4,1}&=&2q\mathscr{Y}_{4,1}+(q-1)\mathscr{Y}_{4,2}+
\mathscr{Y}_{4,3}+T_1T_2T_1+T_2T_3T_2+T_2T_3T_1+(q-1)T_1T_3.
\end{eqnarray*}
Thus
\begin{eqnarray*}
\mathscr{Y}_{4,1}\mathscr{Y}_{4,2}-\mathscr{Y}_{4,2}\mathscr{Y}_{4,1}&=&
T_3T_1T_2-T_2T_1T_3, 
\end{eqnarray*}
which shows $\mathscr{Y}_{4,1}$ and $\mathscr{Y}_{4,2}$ are noncommutative. On the other hand, we have
\begin{eqnarray*}
\mathscr{Y}_{4,1}\mathscr{Y}_{4,3}&=&2q\mathscr{Y}_{4,2}+(q-1)\mathscr{Y}_{4,3}
+\mathscr{Y}_{4,4}+(q-1)(T_1T_2T_1+T_2T_3T_2+T_1T_3T_2)-qT_1T_3,\\
\mathscr{Y}_{4,3}\mathscr{Y}_{4,1}&=&2q\mathscr{Y}_{4,2}+(q-1)\mathscr{Y}_{4,3}
+\mathscr{Y}_{4,4}+(q-1)(T_1T_2T_1+T_2T_3T_2+T_1T_3T_2)-qT_1T_3.
\end{eqnarray*}
Thus $\mathscr{Y}_{4,1}$ and $\mathscr{Y}_{4,3}$ are commutative, while $\mathscr{Y}_{4,1}\mathscr{Y}_{4,3}$ is not a $\mathbb{Z}[q]$-linear combination of $\mathscr{Y}_{4,2}, \mathscr{Y}_{4,3}, \mathscr{Y}_{4,4}$.
Similarly, we  have
\begin{eqnarray*}
\mathscr{Y}_{4,1}\mathscr{Y}_{4,6}&=&q\mathscr{Y}_{4,5}+
3(q-1)\mathscr{Y}_{4,6},\\
\mathscr{Y}_{4,2}\mathscr{Y}_{4,3}&=&q^2\mathscr{Y}_{4,1}
+q(q-1)\mathscr{Y}_{4,2}+q\mathscr{Y}_{4,3}+(q-1)\mathscr{Y}_{4,4}
+\mathscr{Y}_{4,5},\\
\mathscr{Y}_{4,2}\mathscr{Y}_{4,4}&=&q^2\mathscr{Y}_{4,2}
+q(q-1)\mathscr{Y}_{4,3}+q\mathscr{Y}_{4,4}+(q-1)\mathscr{Y}_{4,5}
+\mathscr{Y}_{4,6}.
\end{eqnarray*}
Thus Theorem~\ref{Them:Comm} shows $\mathscr{Y}_{4,1}$ and $\mathscr{Y}_{4,6}$,  $\mathscr{Y}_{4,2}$ and $\mathscr{Y}_{4,3}$, and $\mathscr{Y}_{4,2}$ and $\mathscr{Y}_{4,4}$ are commutative, respectively.
\end{example}

\begin{example}\label{Exam:n=5}
By Lemma~\ref{Lemm:Recursion-formula}, we have
   \begin{eqnarray*}
 \mathscr{Y}_{5,1}&=&\mathscr{Y}_{4,1}+T_4\mathscr{Y}_{4,0}=T_1+T_2+T_3+T_4,\\
\mathscr{Y}_{5,2}&=&\mathscr{Y}_{4,2}+T_4\mathscr{Y}_{4,1}+T_3T_4\mathscr{Y}_{4,0},\\
 \mathscr{Y}_{5,3}&=&\mathscr{Y}_{4,3}+T_4\mathscr{Y}_{4,2}+
 T_3T_4\mathscr{Y}_{4,1}+T_2T_3T_4\mathscr{Y}_{4,0}.\end{eqnarray*}
Then calculation shows 
\begin{eqnarray*}
\mathscr{Y}_{5,1}\mathscr{Y}_{5,2}&=&
(\mathscr{Y}_{4,1}+T_4)(\mathscr{Y}_{4,2}+T_4\mathscr{Y}_{4,1}+T_3T_4)\\
&=&\mathscr{Y}_{4,1}\mathscr{Y}_{4,2}+\mathscr{Y}_{4,1}T_4\mathscr{Y}_{4,1}
+\mathscr{Y}_{4,1}T_3T_4+T_4\mathscr{Y}_{4,2}
+q\mathscr{Y}_{4,1}+(q-1)T_4\mathscr{Y}_{4,1}+T_3T_4T_3\\
&=&\mathscr{Y}_{4,1}\mathscr{Y}_{4,2}+T_2T_4T_3+
\theta\text{-terms},
\end{eqnarray*}
where
\begin{equation*}
\text{$\theta$-terms}=q\mathscr{Y}_{4,1}+3qT_4+
(q-1)(\mathscr{Y}_{4,1}T_4+T_4\mathscr{Y}_{4,1})+T_4\mathscr{Y}_{4,2}
+\mathscr{Y}_{4,2}T_4+2T_3T_4T_3+T_1(T_3T_4+T_4T_3).
\end{equation*}

Now by applying the $\theta$-involution, we obtain
\begin{eqnarray*}
\mathscr{Y}_{5,2}\mathscr{Y}_{5,1}&=&\mathscr{Y}_{4,2}\mathscr{Y}_{4,1}
+T_3T_4T_2+\theta\text{-terms}.
\end{eqnarray*}
As a consequence, we get
\begin{eqnarray*}
  \mathscr{Y}_{5,1}\mathscr{Y}_{5,2}-\mathscr{Y}_{5,2}\mathscr{Y}_{5,1}
    &=&\mathscr{Y}_{4,1}\mathscr{Y}_{4,2}-\mathscr{Y}_{4,2}\mathscr{Y}_{4,1}
  +(T_2T_4T_3-T_3T_4T_2)\\
&=&(T_3T_1T_2-T_2T_1T_3)+(T_4T_2T_3-T_3T_2T_4).
\end{eqnarray*}
\end{example}
\section{Proof of Theorem~\ref{Theorem:main}}\label{Sec:Proof-Theorem}

The following fact is a generalization of Examples~\ref{Exam:n=4} and \ref{Exam:n=5}.

\begin{lemma}\label{Lemm:12-recursion}
Assume that $n$ is not less $3$.  Then
\begin{equation*}
 \mathscr{Y}_{n+1,1}\mathscr{Y}_{n+1,2} =\mathscr{Y}_{n,1}\mathscr{Y}_{n,2}+T_nT_{n-2}T_{n-1}+
 \text{$\theta$-terms},
\end{equation*}
where \begin{eqnarray*}
   \text{$\theta$-terms}&=&\mathscr{Y}_{n,1}T_{n-1}T_n+T_nT_{n-1}\mathscr{Y}_{n,1}+
(q-1)T_n\mathscr{Y}_{n-1,1}
 +\mathscr{Y}_{n,1}T_n\mathscr{Y}_{n,1}+q\mathscr{Y}_{n,1}+T_{n-1}T_nT_{n-1}.
      \end{eqnarray*}
\end{lemma}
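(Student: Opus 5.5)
The plan is a direct expansion using Lemma~\ref{Lemm:Recursion-formula}, followed by a regrouping of everything except $\mathscr{Y}_{n,1}\mathscr{Y}_{n,2}$ and $T_nT_{n-2}T_{n-1}$ into the displayed list of $\theta$-terms. I expect this exact matching to fail by a $\theta$-invariant residual; see the last paragraph.

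First I write
\begin{equation*}
\mathscr{Y}_{n+1,1}=\mathscr{Y}_{n,1}+T_n,\qquad \mathscr{Y}_{n+1,2}=\mathscr{Y}_{n,2}+T_n\mathscr{Y}_{n,1}+T_{n-1}T_n,
\end{equation*}
both by Lemma~\ref{Lemm:Recursion-formula}. Multiplying out gives six products:
\begin{equation*}
\mathscr{Y}_{n,1}\mathscr{Y}_{n,2},\quad \mathscr{Y}_{n,1}T_n\mathscr{Y}_{n,1},\quad \mathscr{Y}_{n,1}T_{n-1}T_n,\quad T_n\mathscr{Y}_{n,2},\quad T_n^2\mathscr{Y}_{n,1},\quad T_nT_{n-1}T_n.
\end{equation*}
The first three appear verbatim in the statement. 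The braid relation turns the last one into $T_{n-1}T_nT_{n-1}$. The quadratic relation gives
\begin{equation*}
T_n^2\mathscr{Y}_{n,1}=q\mathscr{Y}_{n,1}+(q-1)T_n\mathscr{Y}_{n,1},
\end{equation*}
which supplies the term $q\mathscr{Y}_{n,1}$.

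Next I treat $T_n\mathscr{Y}_{n,2}$, which is where the non-$\theta$-symmetric term appears. I apply Lemma~\ref{Lemm:Recursion-formula} once more, with $n-1$ in place of $n$:
\begin{equation*}
T_n\mathscr{Y}_{n,2}=T_n\mathscr{Y}_{n-1,2}+T_nT_{n-1}\mathscr{Y}_{n-1,1}+T_nT_{n-2}T_{n-1}.
\end{equation*}
This isolates $T_nT_{n-2}T_{n-1}$, whose $\theta$-image is $T_{n-1}T_{n-2}T_n$, a different basis element. I then combine this with $(q-1)T_n\mathscr{Y}_{n,1}=(q-1)T_n\mathscr{Y}_{n-1,1}+(q-1)T_nT_{n-1}$. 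Using $T_nT_{n-1}\mathscr{Y}_{n,1}=T_nT_{n-1}\mathscr{Y}_{n-1,1}+(q-1)T_nT_{n-1}+qT_n$, the combination rewrites as
\begin{equation*}
T_nT_{n-1}\mathscr{Y}_{n,1}+(q-1)T_n\mathscr{Y}_{n-1,1}+T_nT_{n-2}T_{n-1}+\bigl(T_n\mathscr{Y}_{n-1,2}-qT_n\bigr).
\end{equation*}
This produces $T_nT_{n-1}\mathscr{Y}_{n,1}$ and $(q-1)T_n\mathscr{Y}_{n-1,1}$ exactly as listed.

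The main obstacle is this final bookkeeping. My expansion leaves the extra summand $T_n\mathscr{Y}_{n-1,2}-qT_n$, which is not in the displayed list. I will first recheck whether it is absorbed by a different reading of one listed term, for instance $T_nT_{n-1}\mathscr{Y}_{n-1,1}$ in place of $T_nT_{n-1}\mathscr{Y}_{n,1}$, and I will test the formula at $n=3$, where $\mathscr{Y}_{2,2}=0$ and only $-qT_3$ survives. If the residual persists, the identity holds with the displayed $\theta$-terms augmented by $T_n\mathscr{Y}_{n-1,2}-qT_n$. That summand is $\theta$-invariant: $T_n$ commutes with $\mathscr{Y}_{n-1,2}\in\langle T_1,\dots,T_{n-2}\rangle$, and $\theta(\mathscr{Y}_{n-1,2})=\mathscr{Y}_{n-1,2}$.

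In either case, every listed summand is $\theta$-invariant. This is checked termwise using that $\theta$ is an anti-automorphism fixing each $T_i$ and each $\mathscr{Y}_{m,k}$: the terms $\mathscr{Y}_{n,1}T_{n-1}T_n$ and $T_nT_{n-1}\mathscr{Y}_{n,1}$ are swapped by $\theta$, and $T_n\mathscr{Y}_{n-1,1}=\mathscr{Y}_{n-1,1}T_n$. So the commutator $\mathscr{Y}_{n+1,1}\mathscr{Y}_{n+1,2}-\mathscr{Y}_{n+1,2}\mathscr{Y}_{n+1,1}$ satisfies the recursion used afterwards, with increment $T_nT_{n-2}T_{n-1}-T_{n-1}T_{n-2}T_n$.
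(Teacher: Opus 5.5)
Your expansion follows the paper's proof step for step. You expand both factors by Lemma~\ref{Lemm:Recursion-formula}, expand $T_n\mathscr{Y}_{n,2}$ once more, and trade $T_nT_{n-1}\mathscr{Y}_{n-1,1}$ for $T_nT_{n-1}\mathscr{Y}_{n,1}$. Your bookkeeping is right. The paper also obtains $T_n\mathscr{Y}_{n,2}=T_n\mathscr{Y}_{n-1,2}+T_nT_{n-1}\mathscr{Y}_{n,1}+T_{n-2}T_nT_{n-1}-(q-1)T_nT_{n-1}-qT_n$, but then drops $T_n\mathscr{Y}_{n-1,2}-qT_n$ when assembling the $\theta$-terms. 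So the identity as displayed is false.

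The $n=3$ check you planned confirms this. The coefficient of $T_3$ is $2q$ in $\mathscr{Y}_{4,1}\mathscr{Y}_{4,2}$, but $3q$ on the displayed right-hand side. The alternative reading you mention does not help either. With $T_nT_{n-1}\mathscr{Y}_{n-1,1}$ in the list, the residual becomes $T_n\mathscr{Y}_{n-1,2}+(q-1)T_nT_{n-1}$, and the $\theta$-pairing with $\mathscr{Y}_{n,1}T_{n-1}T_n$ is lost.

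So drop the hedge and state the lemma with $T_n\mathscr{Y}_{n-1,2}-qT_n$ added to the $\theta$-terms. As you observe, that summand is $\theta$-invariant, so Proposition~\ref{Prop:12-diff} and Theorem~\ref{Theorem:main} are unaffected.
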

\begin{proof}Thanks to Lemma~\ref{Lemm:Recursion-formula},  we have
  \begin{eqnarray*}
 \mathscr{Y}_{n+1,1} &=&\mathscr{Y}_{n,1}+T_n,\\
 \mathscr{Y}_{n+1,2} &=&\mathscr{Y}_{n,2}+T_n\mathscr{Y}_{n,1}+T_{n-1}T_n.
  \end{eqnarray*}
  Thus
  \begin{eqnarray*}
 \mathscr{Y}_{n+1,1}\mathscr{Y}_{n+1,2}&=&\mathscr{Y}_{n,1}\mathscr{Y}_{n,2}
 +\mathscr{Y}_{n,1}T_{n-1}T_n+T_n\mathscr{Y}_{n,2}+(q-1)T_n\mathscr{Y}_{n,1}
 +\mathscr{Y}_{n,1}T_n\mathscr{Y}_{n,1}+q\mathscr{Y}_{n,1}+T_{n-1}T_nT_{n-1}.
 \end{eqnarray*}
Since
\begin{eqnarray*}
 T_n\mathscr{Y}_{n,2}&=&
T_n(\mathscr{Y}_{n-1,2}+T_{n-1}\mathscr{Y}_{n-1,1}+T_{n-2}T_{n-1})\\
&=&T_n\mathscr{Y}_{n-1,2}+T_nT_{n-1}\mathscr{Y}_{n-1,1}+T_{n-2}T_nT_{n-1}\\
&=&T_n\mathscr{Y}_{n-1,2}+T_nT_{n-1}\mathscr{Y}_{n,1}+T_{n-2}T_nT_{n-1}
-(q-1)T_nT_{n-1}-qT_n,
 \end{eqnarray*}
we obtain
\begin{eqnarray*}
 \mathscr{Y}_{n+1,1}\mathscr{Y}_{n+1,2}&=&\mathscr{Y}_{n,1}\mathscr{Y}_{n,2}
 +T_{n-2}T_nT_{n-1}+\text{$\theta$-terms},\end{eqnarray*}
where
\begin{eqnarray*}\text{$\theta$-terms}&=&
\mathscr{Y}_{n,1}T_{n-1}T_n+T_nT_{n-1}\mathscr{Y}_{n,1}+
(q-1)T_n\mathscr{Y}_{n-1,1}
 +\mathscr{Y}_{n,1}T_n\mathscr{Y}_{n,1}+q\mathscr{Y}_{n,1}+T_{n-1}T_nT_{n-1}.
 \end{eqnarray*}
It completes the proof.
\end{proof}

 Now we have the following fact, which determine completely the difference between $\mathscr{Y}_{n,1}\mathscr{Y}_{n,2}$ and $\mathscr{Y}_{n,2}\mathscr{Y}_{n,1}$.

\begin{proposition}\label{Prop:12-diff}If  $n\geq 4$ then
\begin{eqnarray*}
 \mathscr{Y}_{n,1}\mathscr{Y}_{n,2}-\mathscr{Y}_{n,2}\mathscr{Y}_{n,1}&=&
\sum_{i=1}^{n-3}(T_{i+2}T_iT_{i+1}-T_{i+1}T_iT_{i+2}).
\end{eqnarray*}
In particular, $\mathscr{Y}_{n,1}$ and $\mathscr{Y}_{n,2}$ are noncommutative.
\end{proposition}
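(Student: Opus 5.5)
Induction on $n$ is the natural route, with Example~\ref{Exam:n=4} as the base case $n=4$ and Lemma~\ref{Lemm:12-recursion} supplying the step. Applying $\theta$ to the identity of Lemma~\ref{Lemm:12-recursion} and using $\theta(\mathscr{Y}_{m,i})=\mathscr{Y}_{m,i}$ gives
\begin{equation*}
\mathscr{Y}_{n+1,2}\mathscr{Y}_{n+1,1}=\mathscr{Y}_{n,2}\mathscr{Y}_{n,1}+\theta(T_nT_{n-2}T_{n-1})+\theta(\text{$\theta$-terms}).
\end{equation*}

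The key point is to check that the $\theta$-terms of Lemma~\ref{Lemm:12-recursion} are indeed fixed by $\theta$. Check them one at a time:
\begin{itemize}
\item $\mathscr{Y}_{n,1}T_{n-1}T_n$ and $T_nT_{n-1}\mathscr{Y}_{n,1}$ are swapped by $\theta$.
\item $\mathscr{Y}_{n,1}T_n\mathscr{Y}_{n,1}$, $q\mathscr{Y}_{n,1}$ and $T_{n-1}T_nT_{n-1}$ are each $\theta$-invariant.
\item $(q-1)T_n\mathscr{Y}_{n-1,1}$ is invariant because $T_n$ commutes with $T_1,\dots,T_{n-2}$.
\end{itemize}
So the $\theta$-terms cancel in the commutator. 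Since $T_{n-2}$ and $T_n$ commute, $\theta(T_nT_{n-2}T_{n-1})=T_{n-1}T_{n-2}T_n$. Hence
\begin{equation*}
[\mathscr{Y}_{n+1,1},\mathscr{Y}_{n+1,2}]=[\mathscr{Y}_{n,1},\mathscr{Y}_{n,2}]+\bigl(T_nT_{n-2}T_{n-1}-T_{n-1}T_{n-2}T_n\bigr).
\end{equation*}
The added term is exactly the $i=n-2$ summand of the claimed formula for $n+1$. Starting from $T_3T_1T_2-T_2T_1T_3$ for $n=4$, induction gives the sum $\sum_{i=1}^{n-3}$. The lemma requires $n\ge 3$ and we use it for $n\ge4$, so this is fine. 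The main obstacle is only the bookkeeping of the $\theta$-invariance just checked; note also that one could even start the induction at $n=3$, where the sum is empty and the commutativity in $H_3(q)$ was already observed.

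For noncommutativity, the right-hand side is nonzero. The permutations $s_{i+2}s_is_{i+1}$ and $s_{i+1}s_is_{i+2}$ are distinct elements of length $3$: one is the inverse of the other, and neither is an involution. For distinct $i$ they are also distinct from one another, since their supports $\{i,i+1,i+2\}$ differ. Each $T_w$ has reduced-word product form, so these are distinct standard basis elements. The coefficients therefore cannot cancel, and by linear independence of $\{T_w\}$ the sum is nonzero for $n\ge4$.
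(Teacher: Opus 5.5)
Your proof is correct and follows the paper's argument: induction from the case $n=4$ of Example~\ref{Exam:n=4}, where applying $\theta$ to Lemma~\ref{Lemm:12-recursion} cancels the $\theta$-invariant remainder and leaves only the new summand $T_nT_{n-2}T_{n-1}-T_{n-1}T_{n-2}T_n$; your term-by-term invariance check and your linear-independence argument for noncommutativity supply details the paper leaves implicit. One caveat, which affects the paper equally: expanding $\mathscr{Y}_{n+1,1}\mathscr{Y}_{n+1,2}$ also produces the terms $T_n\mathscr{Y}_{n-1,2}-qT_n$, which are missing from the printed list of $\theta$-terms in Lemma~\ref{Lemm:12-recursion}, but both are $\theta$-invariant (since $T_n$ commutes with $\mathscr{Y}_{n-1,2}$), so your argument goes through unchanged.
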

\begin{proof}We show the equality by applying the induction argument on $n$. For $n=4$, it follows form Example~\ref{Exam:n=4}. Assume that it holds for $n\geq 4$. We show it holds for $n+1$.

Since $\mathscr{Y}_{n+1,2}\mathscr{Y}_{n+1,1}=
\theta(\mathscr{Y}_{n+1,1}\mathscr{Y}_{n+1,2})$ and $\theta$-term is $\theta$-invariant, Lemma~\ref{Lemm:12-recursion} shows
\begin{eqnarray*}
 \mathscr{Y}_{n+1,1}\mathscr{Y}_{n+2,2}-\mathscr{Y}_{n+1,2}\mathscr{Y}_{n+1,1}
 &=&(\mathscr{Y}_{n,1}\mathscr{Y}_{n,2}-\mathscr{Y}_{n,2}\mathscr{Y}_{n,1})
+T_{n-2}T_{n}T_{n-1}-\theta(T_{n-1}T_{n}T_{n-1})\\
 &=&(\mathscr{Y}_{n,1}\mathscr{Y}_{n,2}-\mathscr{Y}_{n,2}\mathscr{Y}_{n,1})
+T_{n-2}T_{n}T_{n-1}-T_{n-1}T_nT_{n-2}\\
&=&\sum_{i=1}^{n-2}(T_{i+2}T_iT_{i+1}-T_{i+1}T_iT_{i+2}).
\end{eqnarray*}
It completes the proof.
\end{proof}

\begin{proof}[Proof of Theorem~\ref{Theorem:main}]The theorem follows directly by applying Proposition~\ref{Prop:12-diff}.
\end{proof}

Motivated by Proposition~\ref{Prop:12-diff}, it is interesting to give a recursion formula for $\mathscr{Y}_{n,i}\mathscr{Y}_{n,j}$, which will provide a necessary and sufficient condition for the commutativity of $\mathscr{Y}_{n,i}$ and $\mathscr{Y}_{n,j}$ for $1\leq i< j\leq \binom{n}{2}$. Unfortunately it is very complicate to do that.  Indeed, by Lemma~\ref{Lemm:Recursion-formula}, we have
\begin{eqnarray*}
\mathscr{Y}_{n+1,i}\mathscr{Y}_{n+1,j}&=&
(\mathscr{Y}_{n,i}+\sum_{a=1}^nT_{n+1-a}\cdots T_n\mathscr{Y}_{n,i-a})(\mathscr{Y}_{n,j}+\sum_{b=1}^nT_{n+1-b}\cdots T_n\mathscr{Y}_{n,j-b})\\
&=&\mathscr{Y}_{n,i}\mathscr{Y}_{n,j}+
\sum_{b=1}^n\mathscr{Y}_{n,i}T_{n+1-b}\cdots T_n\mathscr{Y}_{n,j-b}
+\sum_{a=1}^n\mathscr{Y}_{n,j}T_{n+1-a}\cdots T_n\mathscr{Y}_{n,i-a}\\
&&\qquad
+\sum_{a=1}^n\sum_{b=1}^nT_{n+1-a}\cdots T_n\mathscr{Y}_{n,i-a}T_{n+1-b}\cdots T_n\mathscr{Y}_{n,j-b}.
\end{eqnarray*}
Thus \begin{eqnarray}\label{Equ:ij-diff}
\mathscr{Y}_{n+1,i}\mathscr{Y}_{n+1,j}-
\mathscr{Y}_{n+1,j}\mathscr{Y}_{n+1,i}&=&\mathscr{Y}_{n,i}\mathscr{Y}_{n,j}-
\mathscr{Y}_{n,j}\mathscr{Y}_{n,i}+\text{other terms.}
\end{eqnarray}

The following fact illustrates it is hard to determine explicitly the ``other terms" in Eq.\eqref{Equ:ij-diff}.

\begin{lemma}\label{Lemm:13-recursion}
Assume that $n\geq 3$.  Then
\begin{eqnarray*}
 \mathscr{Y}_{n+1,1}\mathscr{Y}_{n+1,3} &=&\mathscr{Y}_{n,1}\mathscr{Y}_{n,3}+\mathscr{Y}_{n-1,1}
\mathscr{Y}_{n-1,2}T_n+T_nT_{n-2}T_{n-1}
(\mathscr{Y}_{n-3,1}+q-1)+\tilde{\theta},
\end{eqnarray*}
where $\tilde{\theta}$ is a $\theta$-invariants, which can be determined explicitly (see the proof).
\end{lemma}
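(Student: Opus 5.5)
We follow the pattern of the proof of Lemma~\ref{Lemm:12-recursion}: expand both factors with Lemma~\ref{Lemm:Recursion-formula}, then push the new generator $T_n$ to one side, so that every term is either visibly $\theta$-invariant or one of the displayed non-invariant terms. By Lemma~\ref{Lemm:Recursion-formula},
\begin{eqnarray*}
\mathscr{Y}_{n+1,1}&=&\mathscr{Y}_{n,1}+T_n,\\
\mathscr{Y}_{n+1,3}&=&\mathscr{Y}_{n,3}+T_n\mathscr{Y}_{n,2}+T_{n-1}T_n\mathscr{Y}_{n,1}+T_{n-2}T_{n-1}T_n.
\end{eqnarray*}
Multiplying out and using $T_n^2=(q-1)T_n+q$ gives
\begin{eqnarray*}
\mathscr{Y}_{n+1,1}\mathscr{Y}_{n+1,3}&=&\mathscr{Y}_{n,1}\mathscr{Y}_{n,3}+\mathscr{Y}_{n,1}T_n\mathscr{Y}_{n,2}+\mathscr{Y}_{n,1}T_{n-1}T_n\mathscr{Y}_{n,1}+\mathscr{Y}_{n,1}T_{n-2}T_{n-1}T_n\\
&&+T_n\mathscr{Y}_{n,3}+(q-1)T_n\mathscr{Y}_{n,2}+q\mathscr{Y}_{n,2}+T_nT_{n-1}T_n\mathscr{Y}_{n,1}+T_nT_{n-2}T_{n-1}T_n.
\end{eqnarray*}
Some pieces are already $\theta$-invariant: $q\mathscr{Y}_{n,2}$, and $T_nT_{n-2}T_{n-1}T_n=T_{n-2}T_nT_{n-1}T_n$, whose reversed word is a reduced word of the same element. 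Others pair up under $\theta$ only after rewriting. For example, $T_nT_{n-1}T_n\mathscr{Y}_{n,1}$ pairs with $\mathscr{Y}_{n,1}T_{n-1}T_nT_{n-1}$ (using the braid relation), and $\mathscr{Y}_{n,1}T_{n-2}T_{n-1}T_n$ should pair with a piece of $T_n\mathscr{Y}_{n,3}$.

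The main work is expanding the terms that contain $T_n$ only once, namely $T_n\mathscr{Y}_{n,3}$, $T_n\mathscr{Y}_{n,2}$ and $\mathscr{Y}_{n,1}T_n\mathscr{Y}_{n,2}$. For these we apply Lemma~\ref{Lemm:Recursion-formula} once more, at level $n-1$:
\begin{eqnarray*}
\mathscr{Y}_{n,2}&=&\mathscr{Y}_{n-1,2}+T_{n-1}\mathscr{Y}_{n-1,1}+T_{n-2}T_{n-1},\\
\mathscr{Y}_{n,3}&=&\mathscr{Y}_{n-1,3}+T_{n-1}\mathscr{Y}_{n-1,2}+T_{n-2}T_{n-1}\mathscr{Y}_{n-1,1}+T_{n-3}T_{n-2}T_{n-1}.
\end{eqnarray*}
Since $T_n$ commutes with $\mathscr{Y}_{n-1,k}$, each product can be written as $T_n$ times an element of $H_{n-1}(q)$, or in the mirror form. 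Whenever a mirror partner is needed we rewrite it, exactly as in the proof of Lemma~\ref{Lemm:12-recursion}, via $T_{n-1}\mathscr{Y}_{n-1,1}=T_{n-1}\mathscr{Y}_{n,1}-(q-1)T_{n-1}-q$.

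Concretely:
\begin{itemize}
\item $T_n\mathscr{Y}_{n-1,3}$ is handled by its mirror pair $\mathscr{Y}_{n-1,3}T_n$.
\item Expanding $\mathscr{Y}_{n,1}T_n\mathscr{Y}_{n,2}$ with $\mathscr{Y}_{n,1}=\mathscr{Y}_{n-1,1}+T_{n-1}$ produces $\mathscr{Y}_{n-1,1}\mathscr{Y}_{n-1,2}T_n$. Its $\theta$-image is $T_n\mathscr{Y}_{n-1,2}\mathscr{Y}_{n-1,1}$. Only a symmetric part of it can be absorbed into $\tilde{\theta}$, so $\mathscr{Y}_{n-1,1}\mathscr{Y}_{n-1,2}T_n$ is kept explicitly.
\item The term $T_nT_{n-2}T_{n-1}(\mathscr{Y}_{n-3,1}+q-1)$ should come from collecting $T_nT_{n-2}T_{n-1}\mathscr{Y}_{n-1,1}$ type pieces. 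Splitting $\mathscr{Y}_{n-1,1}=\mathscr{Y}_{n-3,1}+T_{n-2}+T_{n-1}$, the $T_{n-2},T_{n-1}$ parts, after quadratic reduction, yield either length-four $\theta$-symmetric words or the $(q-1)$ correction.
\end{itemize}

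The main obstacle is the bookkeeping. Every product of the form $T_{\text{word}}\,T_j$ with $j\in\{n-2,n-1,n\}$ has to be reduced, and each resulting standard-basis element must be matched either with its $\theta$-image elsewhere in the sum or with one of the three listed non-invariant terms. To keep this under control, we would list, for each of the eight product types, its contribution grouped by the position of $T_n$ in the reduced word. The $\theta$-invariant remainder collected along the way is then $\tilde{\theta}$, given explicitly. As a sanity check, at $n=3$ the formula must reproduce the commutativity of $\mathscr{Y}_{4,1}$ and $\mathscr{Y}_{4,3}$ from Example~\ref{Exam:n=4}.
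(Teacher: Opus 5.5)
Your route is the same as the paper's: expand with Lemma~\ref{Lemm:Recursion-formula}, push $T_n$ outward, and pair words under $\theta$. The gap is that the step you defer as ``bookkeeping'' cannot be completed for $n\geq 4$, because the identity is false there.

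\textbf{The obstruction.} Since $T_sT_w$ is either $T_{sw}$ or $(q-1)T_w+qT_{sw}$, the coefficient of $T_u$ in $\mathscr{Y}_{m,1}\mathscr{Y}_{m,j}$ is
\begin{itemize}
\item[(a)] $|D_L(u)|$ if $\ell(u)=j+1$,
\item[(b)] $(q-1)|D_L(u)|$ if $\ell(u)=j$,
\item[(c)] $q(m-1-|D_L(u)|)$ if $\ell(u)=j-1$,
\end{itemize}
and $0$ otherwise. Here $D_L(u)=\{s_k:\ell(s_ku)<\ell(u)\}$. For $\mathscr{Y}_{m,j}\mathscr{Y}_{m,1}$ the same holds with $D_R$ in place of $D_L$.

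Take $n=4$ and $u=s_1s_2s_4s_3$. Then $D_L(u)=\{s_1,s_4\}$ and $D_R(u)=\{s_3\}$. So $T_1T_2T_4T_3$ has coefficient $2$ in $\mathscr{Y}_{5,1}\mathscr{Y}_{5,3}$: once from $T_4\cdot T_1T_2T_3$ in $T_4\mathscr{Y}_{4,3}$, and once from $T_1T_4\cdot T_2T_3$ in $\mathscr{Y}_{4,1}T_4\mathscr{Y}_{4,2}$. Its image $\theta(T_1T_2T_4T_3)=T_3T_4T_2T_1$ has coefficient $1$. Neither word occurs in $\mathscr{Y}_{4,1}\mathscr{Y}_{4,3}$, in $\mathscr{Y}_{3,1}\mathscr{Y}_{3,2}T_4$, or in $T_4T_2T_3(\mathscr{Y}_{1,1}+q-1)=(q-1)T_4T_2T_3$. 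Hence $\tilde{\theta}$ cannot be $\theta$-invariant. The same happens for $s_1s_4s_3s_2$.

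In general, the two Coxeter elements of $\langle s_{n-3},\dots,s_n\rangle$ having both $s_{n-3}$ and $s_n$ as left descents are unbalanced. A correct statement for $n\geq4$ must add the non-invariant term $T_{n-3}T_n(T_{n-2}T_{n-1}+T_{n-1}T_{n-2})$ to the right-hand side. As stated, the lemma holds only for $n=3$.

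These are exactly the words $T_nT_{n-3}T_{n-2}T_{n-1}$ and $T_nT_{n-1}T_{n-3}T_{n-2}$ of $T_n\mathscr{Y}_{n,3}$ to which your bullets never assign a partner. They are also the two words the paper's proof silently drops when it rewrites $C+Y$.

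\textbf{Your sanity check points the wrong way.} At $n=3$ the lemma is true: there $\mathscr{Y}_{2,2}=\mathscr{Y}_{0,1}=0$ and $\mathscr{Y}_{3,1}\mathscr{Y}_{3,3}=\mathscr{Y}_{3,3}\mathscr{Y}_{3,1}$. Applying $1-\theta$ then gives $\mathscr{Y}_{4,1}\mathscr{Y}_{4,3}-\mathscr{Y}_{4,3}\mathscr{Y}_{4,1}=(q-1)(T_3T_1T_2-T_2T_1T_3)\neq 0$. This contradicts the commutativity claimed in Example~\ref{Exam:n=4}. The descent count, $D_L(s_3s_1s_2)=\{s_1,s_3\}$ versus $D_R(s_3s_1s_2)=\{s_2\}$, confirms that the example is what is wrong; the two products agree only at $q=1$.

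\textbf{A smaller slip.} $T_nT_{n-2}T_{n-1}T_n$ is not $\theta$-invariant by itself. For $w=s_{n-2}s_{n-1}s_ns_{n-1}$ we have $D_L(w)=\{s_{n-2},s_n\}\neq\{s_{n-1},s_n\}=D_R(w)$, so $w\neq w^{-1}$. It is balanced by $T_{n-1}T_nT_{n-1}T_{n-2}$ inside $T_nT_{n-1}T_n\mathscr{Y}_{n,1}$, so this slip is harmless.

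Together these show the matching must be checked word by word. It is more efficient to use the descent-count formula above, which reduces the whole lemma to comparing $|D_L(u)|$ and $|D_R(u)|$ for $u\in\mathfrak{S}_{n+1}\setminus\mathfrak{S}_n$ of length $2,3,4$.
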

\begin{proof}Thanks to Lemma~\ref{Lemm:Recursion-formula},  we have
  \begin{eqnarray*}
 \mathscr{Y}_{n+1,1} &=&\mathscr{Y}_{n,1}+T_n,\\
 \mathscr{Y}_{n+1,3} &=&\mathscr{Y}_{n,3}+T_n\mathscr{Y}_{n,2}+
 T_{n-1}T_n\mathscr{Y}_{n,1}+T_{n-2}T_{n-1}T_n.
  \end{eqnarray*}
  Thus
  \begin{eqnarray*}
 \mathscr{Y}_{n\!+\!1,1}\mathscr{Y}_{n\!+\!1,3}\!\!&=&\!
(\mathscr{Y}_{n,1}+T_n)
 (\mathscr{Y}_{n,3}+T_n\mathscr{Y}_{n,2}+
 T_{n-1}T_n\mathscr{Y}_{n,1}+T_{n-2}T_{n-1}T_n)\\
\!\!&=&\!\!\mathscr{Y}_{n,1}\mathscr{Y}_{n,3}
 \!+\!\mathscr{Y}_{n,1}T_{n\!-\!1}T_n\mathscr{Y}_{n,1}\!+\!
 \mathscr{Y}_{n,1}T_n\mathscr{Y}_{n,2}\!+\!(\!q\!-\!1\!)T_n\mathscr{Y}_{n,2}
\!+\!qT_{n\!-\!1}T_n\!+\!
 \mathscr{Y}_{n,1}T_{n\!-\!2}T_{n\!-\!1}T_n\!+\! T_n\mathscr{Y}_{n,3}\!+\!\theta_1,\end{eqnarray*}
 where
\begin{eqnarray*}
\theta_1&=&q\mathscr{Y}_{n,2}+(q-1)T_{n-1}T_nT_{n-1}+
T_{n-1}T_nT_{n-1}\mathscr{Y}_{n-1,1}+T_{n-2}T_{n-1}T_nT_{n-1}.
\end{eqnarray*}

Firstly, by applying Lemma~\ref{Lemm:Recursion-formula} again, we have
 \begin{eqnarray*}
A&:=&\mathscr{Y}_{n,1}T_{n\!-\!1}T_n\mathscr{Y}_{n,1}+
\mathscr{Y}_{n,1}T_n\mathscr{Y}_{n,2}\\
\!\!&=&\!\!\mathscr{Y}_{n,1}T_{n\!-\!1}T_n\mathscr{Y}_{n,1}+\mathscr{Y}_{n,1}T_n (\mathscr{Y}_{n-1,2}+T_{n-1}\mathscr{Y}_{n-1,1}+T_{n-2}T_{n-1})\\
 \!\!&=&\!\mathscr{Y}_{n,1}T_{n\!-\!1}T_n\mathscr{Y}_{n,1}+\mathscr{Y}_{n,1}T_n\mathscr{Y}_{n-1,2}+\mathscr{Y}_{n,1}
T_nT_{n-1}\mathscr{Y}_{n-1,1}+\mathscr{Y}_{n,1}T_{n-2}T_nT_{n-1}\\
\!\!&=&\!\!\mathscr{Y}_{n\!-\!1,1}
\mathscr{Y}_{n\!-\!1,2}T_n\!+\!T_{n\!-\!1}T_n\mathscr{Y}_{n\!-\!1,2}\!+\!
\mathscr{Y}_{n,1}T_{n\!-\!2}T_nT_{n\!-\!1}\!-\!
(q\!-\!1)\mathscr{Y}_{n,1}T_nT_{n\!-\!1}\!-\!q\mathscr{Y}_{n,1}T_n+\theta_A,
\end{eqnarray*}
where $\theta_A=\mathscr{Y}_{n,1}T_{n\!-\!1}T_n\mathscr{Y}_{n,1}+
\mathscr{Y}_{n,1}T_nT_{n\!-\!1}\mathscr{Y}_{n,1}$.

Now let $Z=\!T_{n\!-\!1}T_n\mathscr{Y}_{n\!-\!1,2}\!+\!
\mathscr{Y}_{n,1}T_{n\!-\!2}T_nT_{n\!-\!1}$. Then
\begin{eqnarray*}
 Z&=&\!
T_{n\!-\!1}T_n\mathscr{Y}_{n-2,2}+
T_{n-1}T_nT_{n-3}T_{n-2}+(q-1)T_{n-2}T_nT_{n-1}+q
T_nT_{n-1}+\theta_Z,
\end{eqnarray*}
where $\theta_Z=T_{n-1}T_nT_{n-2}\mathscr{Y}_{n-2,1}+
\mathscr{Y}_{n-2,1}T_{n-2}T_nT_{n-1}$.

Secondly, we let $B:=(q\!-\!1)T_n\mathscr{Y}_{n,2}\!+\!qT_{n\!-\!1}T_n$. Then
\begin{eqnarray*}
B&=&(q-1)\mathscr{Y}_{n,1}T_nT_{n-1}+q\mathscr{Y}_{n,1}T_n+
(q-1)T_nT_{n-1}T_{n-2}+\theta_B,\end{eqnarray*}
where $\theta_B=(q\!-\!1)\mathscr{Y}_{n\!-\!1,2}T_n\!-\!
(q\!-\!1)T_{n\!-\!1}T_nT_{n\!-\!1}-q\mathscr{Y}_{n\!-\!1,1}T_n$.

Thus we get that
\begin{eqnarray*}
A+B&=&\mathscr{Y}_{n-1,1}\mathscr{Y}_{n-1,2}T_n+Z+
(B-(q-1)\mathscr{Y}_{n,1}T_nT_{n-1}-q\mathscr{Y}_{n,1}T_n)+\theta_A\\
&=&\mathscr{Y}_{n-1,1}\mathscr{Y}_{n-1,2}T_n+
T_{n-1}T_n\mathscr{Y}_{n-2,2}+T_{n-1}T_nT_{n-3}
T_{n-2}+(q-1)T_nT_{n-2}T_{n-1}+Y+\theta_{AB},
\end{eqnarray*}
where $Y=(q-1)T_nT_{n-1}T_{n-2}
+qT_nT_{n-1}$ and $\theta_{AB}=\theta_A+\theta_B+\theta_Z$. Further we have \begin{eqnarray*}
\mathscr{Y}_{n,1}T_{n-2}T_{n-1}T_n+Y&=&
\mathscr{Y}_{n-2,1}T_{n\!-\!2}T_{n\!-\!1}T_n+
T_{n-1}T_{n-2}T_{n-1}T_n+\theta_{Y},
\end{eqnarray*}
where $\theta_Y=Y+\theta(Y)$.

Thirdly, let
$C=\mathscr{Y}_{n,1}T_{n-2}T_{n-1}T_n+ T_n\mathscr{Y}_{n,3}$.
since \begin{eqnarray*}
 T_nT_{n\!-\!1}\mathscr{Y}_{n\!-\!1,2}\!&=&\!
T_nT_{n\!-\!1}\mathscr{Y}_{n\!-\!2,2}\!+\!
 T_nT_{n\!-\!1}T_{n\!-\!2}\mathscr{Y}_{n-2,1}\!+\!
T_nT_{n\!-\!1}T_{n\!-\!3}T_{n\!-\!2},
\end{eqnarray*}
we have
\begin{eqnarray*}
C+Y&=&(\mathscr{Y}_{n,1}T_{n\!-\!2}T_{n\!-\!1}T_n\!+Y)+\!T_n\mathscr{Y}_{n-1,3}+
 T_nT_{n-1}\mathscr{Y}_{n-1,2}+T_nT_{n-2}T_{n-1}\mathscr{Y}_{n-1,1}
 +T_nT_{n-3}T_{n-2}T_{n-1}\\
\!\!&=&\!\!T_nT_{n\!-\!1}\mathscr{Y}_{n\!-\!2,2}\!+
\!T_nT_{n\!-\!2}T_{n\!-\!1}
\mathscr{Y}_{n\!-\!2,1}\!+\!\theta_{C},
 \end{eqnarray*}
where $\theta_C=T_n\mathscr{Y}_{n\!-\!1,3}\!+\!\mathscr{Y}_{n-2,1}
T_{n\!-\!2}T_{n\!-\!1}T_n\!+
T_nT_{n\!-\!1}T_{n\!-\!2}\mathscr{Y}_{n-2,1}+
T_{n-2}(T_nT_{n-1}+T_{n-1}T_n)T_{n-2}+\theta_{Y}$.

Finally, let $X=T_nT_{n-2}T_{n-1}\mathscr{Y}_{n-2,1}+
T_{n-1}T_nT_{n-3}T_{n-2}$. Then
\begin{eqnarray*}
&&X=T_nT_{n-2}T_{n-1}\mathscr{Y}_{n-3,1}+\theta_X
\end{eqnarray*}
where $\theta_X=T_{n-2}T_nT_{n-1}T_{n-3}+
T_{n-3}T_{n-1}T_nT_{n-2}$.

Combining above equalities, we obtain
\begin{eqnarray*}
 \mathscr{Y}_{n\!+\!1,1}\mathscr{Y}_{n\!+\!1,3}\!\!&=&\!
\mathscr{Y}_{n,1}\mathscr{Y}_{n,3}
 \!+\!A\!+\!B\!+\!C\!+\!\theta_1\\
&=&\mathscr{Y}_{n,1}\mathscr{Y}_{n,3}+\mathscr{Y}_{n\!-\!1,1}
\mathscr{Y}_{n\!-\!1,2}T_n\!+\!T_{n\!-\!1}T_n\mathscr{Y}_{n\!-\!2,2}
+\!T_{n\!-\!1}T_nT_{n\!-\!3}T_{n\!-\!2}+(q\!-\!1)
T_nT_{n\!-\!2}T_{n\!-\!1}\!+\!Y\!+\!C\!+\!\theta_1\\
\!\!&=&\!\!\mathscr{Y}_{n,1}\mathscr{Y}_{n,3}\!+\!\mathscr{Y}_{n\!-\!1,1}
\mathscr{Y}_{n\!-\!1,2}T_n\!+\!T_nT_{n\!-\!2}T_{n\!-\!1}
(\mathscr{Y}_{n-3,1}+q-1)\!+\!\theta,
\end{eqnarray*}
 where
\begin{eqnarray*}
\tilde{\theta}&=&(T_{n-1}T_n+T_nT_{n-1})\mathscr{Y}_{n-2,2}
+\theta_1+\theta_{AB}+\theta_C+\theta_X,
\end{eqnarray*}
which is $\theta$-invariant.
\end{proof}

Now Lemma~\ref{Lemm:13-recursion} enables us to obtain the following recursion formula, which determine completely the difference between $\mathscr{Y}_{n,1}\mathscr{Y}_{n,3}$ and $\mathscr{Y}_{n,3}\mathscr{Y}_{n,1}$.
\begin{corollary}For $n\geq 4$, we have
\begin{eqnarray*}
\mathscr{Y}_{n\!+\!1,1}\mathscr{Y}_{n\!+\!1,3}\!-\!
\mathscr{Y}_{n\!+\!1,3}\mathscr{Y}_{n\!+\!1,1}&=&
\mathscr{Y}_{n,1}\mathscr{Y}_{n,3}\!-\!
\mathscr{Y}_{n,3}\mathscr{Y}_{n,1}+
(\mathscr{Y}_{n-1,1}\mathscr{Y}_{n-1,3}-
\mathscr{Y}_{n-1,3}\mathscr{Y}_{n-1,1})T_n\\
&&\qquad
+(\mathscr{Y}_{n-3,1}+q-1)
(T_{n-2}T_nT_{n-1}-T_{n-1}T_nT_{n-2}).
\end{eqnarray*}
\end{corollary}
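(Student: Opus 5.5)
The plan is to apply the anti-automorphism $\theta$ to the identity of Lemma~\ref{Lemm:13-recursion} and subtract, exactly as in the proof of Proposition~\ref{Prop:12-diff}. Since $\theta(\mathscr{Y}_{m,k})=\mathscr{Y}_{m,k}$ for all $m,k$ and $\theta$ reverses products, we have
\begin{equation*}
\theta(\mathscr{Y}_{n+1,1}\mathscr{Y}_{n+1,3})=\mathscr{Y}_{n+1,3}\mathscr{Y}_{n+1,1}
\quad\text{and}\quad
\theta(\mathscr{Y}_{n,1}\mathscr{Y}_{n,3})=\mathscr{Y}_{n,3}\mathscr{Y}_{n,1}.
\end{equation*}
The term $\tilde{\theta}$ is $\theta$-invariant, so it cancels in the difference. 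This leaves
\begin{equation*}
[\mathscr{Y}_{n+1,1},\mathscr{Y}_{n+1,3}]=[\mathscr{Y}_{n,1},\mathscr{Y}_{n,3}]+\bigl(D_1-\theta(D_1)\bigr)+\bigl(D_2-\theta(D_2)\bigr),
\end{equation*}
where $D_1=\mathscr{Y}_{n-1,1}\mathscr{Y}_{n-1,2}T_n$ and $D_2=T_nT_{n-2}T_{n-1}(\mathscr{Y}_{n-3,1}+q-1)$. The hypothesis $n\geq 4$ ensures that all indices appearing are meaningful, with $\mathscr{Y}_{1,1}=0$ when $n=4$.

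For the last term I use commutation only. The element $\mathscr{Y}_{n-3,1}$ is a sum of $T_1,\dots,T_{n-4}$, so it commutes with $T_{n-2},T_{n-1},T_n$. Also $T_nT_{n-2}=T_{n-2}T_n$. Hence
\begin{equation*}
D_2=(\mathscr{Y}_{n-3,1}+q-1)T_{n-2}T_nT_{n-1},\qquad \theta(D_2)=(\mathscr{Y}_{n-3,1}+q-1)T_{n-1}T_nT_{n-2}.
\end{equation*}
This gives precisely the third summand of the statement.

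For the middle term, $T_n$ commutes with $T_1,\dots,T_{n-2}$, and therefore with every $\mathscr{Y}_{n-1,k}$. So $\theta(D_1)=T_n\mathscr{Y}_{n-1,2}\mathscr{Y}_{n-1,1}=\mathscr{Y}_{n-1,2}\mathscr{Y}_{n-1,1}T_n$, and $D_1-\theta(D_1)$ is a commutator of $\mathscr{Y}_{n-1}$-elements multiplied on the right by $T_n$. To reach the form stated in the corollary, this commutator must be identified with $[\mathscr{Y}_{n-1,1},\mathscr{Y}_{n-1,3}]$.

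This identification is the step I expect to be the main obstacle. Read literally, the recursion in Lemma~\ref{Lemm:13-recursion} produces the $(1,2)$-commutator, which Proposition~\ref{Prop:12-diff} evaluates explicitly, rather than the $(1,3)$-commutator. I would therefore first test the stated identity against Example~\ref{Exam:n=4}, where $[\mathscr{Y}_{4,1},\mathscr{Y}_{4,3}]=0$, and against a direct computation of $[\mathscr{Y}_{5,1},\mathscr{Y}_{5,3}]$. I would then re-examine the expansion of $A$ in the proof of Lemma~\ref{Lemm:13-recursion}. The goal is to see whether the contribution $\mathscr{Y}_{n,1}T_n\mathscr{Y}_{n-1,2}$ can be regrouped, modulo $\theta$-invariant terms, into $\mathscr{Y}_{n-1,1}\mathscr{Y}_{n-1,3}T_n$ by using Lemma~\ref{Lemm:Recursion-formula} for $\mathscr{Y}_{n,3}$ once more. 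If such a regrouping exists, the middle term of the corollary follows and the three pieces assemble into the claimed formula.
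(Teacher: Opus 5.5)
\textbf{The identification step is a genuine gap, and it cannot be closed: the statement is false as written.}

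Your reduction is the argument the paper has in mind; it gives nothing beyond ``Lemma~\ref{Lemm:13-recursion} enables us''. You apply $\theta$ to Lemma~\ref{Lemm:13-recursion} and subtract. Your handling of $\tilde{\theta}$ and of the third summand is correct. You are also right that the middle term comes out as $(\mathscr{Y}_{n-1,1}\mathscr{Y}_{n-1,2}-\mathscr{Y}_{n-1,2}\mathscr{Y}_{n-1,1})T_n$.

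No regrouping can turn this into the stated middle term. The quantity $x-\theta(x)$ depends only on $x$ modulo $\theta$-invariant elements, so once Lemma~\ref{Lemm:13-recursion} is granted the middle term is forced. It already differs from the stated one at $n=5$. There $T_5$ is invertible, $\mathscr{Y}_{4,1}\mathscr{Y}_{4,2}-\mathscr{Y}_{4,2}\mathscr{Y}_{4,1}=T_3T_1T_2-T_2T_1T_3\neq 0$, while $\mathscr{Y}_{4,1}$ and $\mathscr{Y}_{4,3}$ commute (Example~\ref{Exam:n=4}).

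More seriously, the identity fails in either reading, because Lemma~\ref{Lemm:13-recursion} itself is wrong. The $n=4$ test you propose would expose this. The key fact: if $\ell(w)=k+1$, the coefficient of $T_w$ in $\mathscr{Y}_{m,1}\mathscr{Y}_{m,k}$ is the number of left descents of $w$, and in $\mathscr{Y}_{m,k}\mathscr{Y}_{m,1}$ it is the number of right descents. No $q$ enters, because $T_iT_u$ has a component of length $\ell(u)+1$ only when $s_iu>u$, and that component is $T_{s_iu}$.

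Now take $w=s_4s_3s_1s_2$, which is $[2,5,1,3,4]$ in one-line notation. Its left descent set is $\{1,4\}$ and its right descent set is $\{2\}$. So $T_4T_3T_1T_2$ occurs in $\mathscr{Y}_{5,1}\mathscr{Y}_{5,3}-\mathscr{Y}_{5,3}\mathscr{Y}_{5,1}$ with coefficient $1$. On the right-hand side for $n=4$, none of the three terms involves $T_4T_3T_1T_2$, since $s_4s_3s_1s_2\notin\mathfrak{S}_4\cup\mathfrak{S}_3s_4$:
\begin{itemize}
\item the first term lies in $H_4(q)$;
\item the middle term lies in $H_3(q)T_4=\mathrm{span}\{T_{us_4}\mid u\in\mathfrak{S}_3\}$, whether $\mathscr{Y}_{3,3}$ or $\mathscr{Y}_{3,2}$ is used;
\item the last term is $(q-1)(T_2T_4T_3-T_3T_4T_2)$, because $\mathscr{Y}_{1,1}=0$.
\end{itemize}

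The same argument with $w=s_ns_{n-1}s_{n-3}s_{n-2}$ gives a counterexample for every $n\geq 4$; its left descents are $\{n-3,n\}$ and its right descents are $\{n-2\}$. For $n\geq 6$ the stated middle term $(\mathscr{Y}_{n-1,1}\mathscr{Y}_{n-1,3}-\mathscr{Y}_{n-1,3}\mathscr{Y}_{n-1,1})T_n$ even has nonzero components of length $5$, which no other term in the identity can have.

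Comparing length-$4$ components shows what Lemma~\ref{Lemm:13-recursion} loses. Set $X=T_nT_{n-1}T_{n-3}T_{n-2}+T_{n-3}T_{n-2}T_nT_{n-1}$; the missing piece is $X-\theta(X)$. For $n=4$ this is exactly the length-$4$ part of $\mathscr{Y}_{5,1}\mathscr{Y}_{5,3}-\mathscr{Y}_{5,3}\mathscr{Y}_{5,1}$.

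So your plan cannot be completed: the statement has to be corrected, not proved.
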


\begin{remark}\label{Remark:Almost-all}Based on the above calculations, it is reasonable to suspect that for almost all $1 \leq i\neq j\leq \binom{n}{2}$, $\mathscr{Y}_{n,i}$ and $\mathscr{Y}_{n,j}$ are noncommutative when $n$ is large enough.  In particular, we believe that if  $\mathscr{Y}_{n,i}$ and $\mathscr{Y}_{n,j}$ are noncommutative, then $\mathscr{Y}_{n+1,i}$ and $\mathscr{Y}_{n+1,j}$ are noncommutative.
\end{remark}


\section{The operators of the left multiplications}\label{Sec:q-symmetric}
Recall that the \textit{length-plus lexicographic order} or \textit{the left tree order} on $\mathfrak{S}_n$ (see \cite[Remark~2.1]{Doikou}, \cite{Lawson}), is defined as follows:
\begin{itemize}
  \item[(1)] $s_1<s_2<\cdots<s_{n-1}$;
  \item[(2)] for $x,y\in \mathfrak{S}_n$, $x<y$ if $\ell(x)<\ell(y)$, or $\ell(x)=\ell(y)$ and $x$ is located to the left of $y$.
\end{itemize}
Naturally, the length-plus lexicographic order on $\mathfrak{S}_n$  induces an order on  he standard basis of $H_n(q)$.

\begin{example}Let $e$ be the identity of $\mathfrak{S}_n$. Then we have
 \begin{eqnarray*}
  & e<s_1<s_2<s_1s_2<s_2s_1<s_1s_2s_1;&\\
    &e<s_1<s_2<s_3<s_1s_2<s_1s_3<s_2s_1<s_2s_3<s_3s_2&\\
  &<s_1s_2s_1<s_1s_2s_3<s_1s_3s_2 <s_2s_1s_3<s_2s_3s_2<s_{3}s_2s_1&\\
  &<s_{1}s_2s_1s_3<s_1s_2s_3s_2
  <s_1s_3s_2s_1<s_2s_1s_3s_2<s_2s_3s_2s_1&\\
  & <s_1s_2s_1s_3s_2<s_1s_2s_3s_2s_1
  <s_2s_1s_3s_2s_1<s_1s_2s_3s_1s_2s_1.&
 \end{eqnarray*}
\end{example}

\begin{example}Let $e$ be the identity of $\mathfrak{S}_n$.
Then we have
 \begin{eqnarray*}
  & T_{e}<T_1<T_2<T_1T_2<T_2T_1<T_1T_2T_1;&\\
   &T_{e}<T_1<T_2<T_3<T_1T_2<T_1T_3<T_2T_1<T_2T_3<T_3T_2&\\
  &\quad<T_1T_2T_1<T_1T_2T_3<T_1T_3T_2 <T_2T_1T_3<T_2T_3T_2<T_{3}T_2T_1&\\
  &\quad<T_{1}T_2T_1T_3<T_1T_2T_3T_2
  <T_1T_3T_2T_1<T_2T_1T_3T_2<T_2T_3T_2T_1&\\
&\quad< T_1T_2T_1T_3T_2<T_1T_2T_2T_2T_1
<T_2T_1T_3T_2T_1<T_1T_2T_3T_1T_2T_1.&
 \end{eqnarray*}
\end{example}

For $i=1,\ldots,\binom{n}{2}$, let $\mathscr{M}^q_{n,i}$ be the matrix corresponding to the operator of the left multiplication by $\mathscr{Y}_{n,i}$ in $H_n(q)$ with respect to the length-plus lexicographic order. It is natural to expect that $\mathscr{M}^q_{n,i}$ has some nice properties. For simplicity, we write $\mathscr{M}_{n,i}$ for $\mathscr{M}^q_{n,i}$ when $q=1$.

\begin{example}\label{Exam:n=3-eigenvalues}
Keeping notations as above, we have
\begin{eqnarray*}
   \mathscr{M}^q_{3,1}=\left(\begin{array}{cccccc}
     0&q&q&0&0&0\\1&q\!-\!1&0&0&1&0\\
     1&0&q\!-\!1&q&0&0\\0&0&1&q\!-\!1&0&q\\
     0&1&0&0&q\!-\!1&q\\0&0&0&1&1&2(q\!-\!1)
   \end{array}
   \right),&\quad&
   \mathscr{M}^q_{3,2}=\left(\begin{array}{cccccc}
     0&0&0&q^2&q^2&0\\0&0&q&0&q(q\!-\!1)&q^2\\
     0&q&0&q(q\!-\!1)&0&q^2\\1&0&q\!-\!1&0&q&2q(q\!-\!1)\\
     1&q\!-\!1&0&q&0&2q(q\!-\!1)\\0&1&1&2(q\!-\!1)&2(q\!-\!1)&2(q\!-\!1)^2
   \end{array}
   \right),\end{eqnarray*}
\begin{eqnarray*}
  &&\mathscr{M}^q_{3,3}=\left(\begin{array}{cccccc}
     0&0&0&0&0&q^3\\0&0&0&q^2&0&q^2(q\!-\!1)\\
     0&0&0&0&q^2&q^2(q-1)\\0&q&0&q(q\!-\!1)&q(q\!-\!1)&q(q\!-\!1)^2\\
     0&0&q&q(q\!-\!1)&q(q\!-\!1)&q(q\!-\!1)^2\\
1&q\!-\!1&q\!-\!1&(q\!-\!1)^2&(q\!-\!1)^2&(q\!-\!1)^3+q(q\!-\!1)
   \end{array}
   \right).
 \end{eqnarray*}
In particular,
 \begin{eqnarray*}
  \mathscr{M}_{3,1}=\left(\begin{array}{cccccc}
     0&1&1&0&0&0\\1&0&0&0&1&0\\
     1&0&0&1&0&0\\0&0&1&0&0&1\\
     0&1&0&0&0&1\\0&0&0&1&1&0
   \end{array}
   \right), &\qquad
   \mathscr{M}_{3,2}=\left(\begin{array}{cccccc}
     0&0&0&1&1&0\\0&0&1&0&0&1\\
     0&1&0&0&0&1\\1&0&0&0&1&0\\
     1&0&0&1&0&0\\0&1&1&0&0&0
   \end{array}
   \right),&\qquad\mathscr{M}_{3,3}=\left(\begin{array}{cccccc}
     0&0&0&0&0&1\\0&0&0&1&0&0\\
     0&0&0&0&1&0\\0&1&0&0&0&0\\
     0&0&1&0&0&0\\1&0&0&0&0&0
   \end{array}
   \right).
 \end{eqnarray*}
Then $\mathscr{M}_{3,1}$ is the incidence matrix of cycle graph $C_6$, and its eigenvalues are $2\cos \frac{2k\pi}{6}$, $k=0,1,2,3,4,5$; $\mathscr{M}_{3,2}$ is the incidence matrix of the direct sum of two 3-cycle graphs and its
  eigenvalues are  $2$ and $-1$ with multiplicities $2$ and $4$, respectively; $\mathscr{M}_{3,3}$ is the direct sum of $\left(\begin{array}{cc}0&1\\1&0
  \end{array}\right)$ with multiplicity $3$, and its eigenvalues  are $1,-1$ with multiplicity $3$.
 \end{example}

\begin{example}Keeping notations as above, we have
  \begin{eqnarray*}
\tiny\mathscr{M}_{4,1}\!=\!\left(\!\begin{array}{cccccccccccccccccccccccc}
0&1&1&1&0&0&0&0&0&0&0&0&0&0&0&0&0&0&0&0&0&0&0&0\\
1&0&0&0&0&1&1&0&0&0&0&0&0&0&0&0&0&0&0&0&0&0&0&0\\
1&0&0&0&1&0&0&0&1&0&0&0&0&0&0&0&0&0&0&0&0&0&0&0\\
1&0&0&0&0&1&0&1&0&0&0&0&0&0&0&0&0&0&0&0&0&0&0&0\\
0&0&1&0&0&0&0&0&0&1&0&1&0&0&0&0&0&0&0&0&0&0&0&0\\
0&1&0&1&0&0&0&0&0&0&0&0&1&0&0&0&0&0&0&0&0&0&0&0\\
0&1&0&0&0&0&0&0&0&1&0&0&0&0&1&0&0&0&0&0&0&0&0&0\\
0&0&0&1&0&0&0&0&0&0&1&0&0&1&0&0&0&0&0&0&0&0&0&0\\
0&0&1&0&0&0&0&0&0&0&0&1&0&1&0&0&0&0&0&0&0&0&0&0\\
0&0&0&0&1&0&1&0&0&0&0&0&0&0&0&0&0&1&0&0&0&0&0&0\\
0&0&0&0&0&0&0&1&0&0&0&0&0&0&0&1&1&0&0&0&0&0&0&0\\
0&0&0&0&1&0&0&0&1&0&0&0&0&0&0&0&0&0&1&0&0&0&0&0\\
0&0&0&0&0&1&0&0&0&0&0&0&0&0&0&1&0&0&0&1&0&0&0&0\\
0&0&0&0&0&0&0&1&1&0&0&0&0&0&0&0&1&0&0&0&0&0&0&0\\
0&0&0&0&0&0&1&0&0&0&0&0&0&0&0&0&0&1&0&1&0&0&0&0\\
0&0&0&0&0&0&0&0&0&0&1&0&1&0&0&0&0&0&0&0&0&1&0&0\\
0&0&0&0&0&0&0&0&0&0&1&0&0&1&0&0&0&0&0&0&1&0&0&0\\
0&0&0&0&0&0&0&0&0&1&0&0&0&0&1&0&0&0&0&0&0&0&1&0\\
0&0&0&0&0&0&0&0&0&0&0&1&0&0&0&0&0&0&0&0&1&0&1&0\\
0&0&0&0&0&0&0&0&0&0&0&0&1&0&1&0&0&0&0&0&0&1&0&0\\
0&0&0&0&0&0&0&0&0&0&0&0&0&0&0&0&1&0&1&0&0&0&0&1\\
0&0&0&0&0&0&0&0&0&0&0&0&0&0&0&1&0&0&0&1&0&0&0&1\\
0&0&0&0&0&0&0&0&0&0&0&0&0&0&0&0&0&1&1&0&0&0&0&1\\
0&0&0&0&0&0&0&0&0&0&0&0&0&0&0&0&0&0&0&0&1&1&1&0
\end{array}\!\right),&&
\tiny\mathscr{M}_{4,2}\!=\!\left(\!\begin{array}{cccccccccccccccccccccccc}
0&0&0&0&1&1&1&1&1&0&0&0&0&0&0&0&0&0&0&0&0&0&0&0\\
0&0&1&1&0&0&0&0&0&1&0&0&1&0&1&0&0&0&0&0&0&0&0&0\\
0&1&0&1&0&0&0&0&0&1&0&1&0&1&0&0&0&0&0&0&0&0&0&0\\
0&1&1&0&0&0&0&0&0&0&1&0&1&1&0&0&0&0&0&0&0&0&0&0\\
1&0&0&0&0&0&1&0&1&0&0&0&0&0&0&0&0&1&1&0&0&0&0&0\\
1&0&0&0&0&0&1&1&0&0&0&0&0&0&0&1&0&0&0&1&0&0&0&0\\
1&0&0&0&1&1&0&0&0&0&0&0&0&0&0&0&0&1&0&1&0&0&0&0\\
1&0&0&0&0&1&0&0&1&0&0&0&0&0&0&1&1&0&0&0&0&0&0&0\\
1&0&0&0&1&0&0&1&0&0&0&0&0&0&0&0&1&0&1&0&0&0&0&0\\
0&1&1&0&0&0&0&0&0&0&0&1&0&0&1&0&0&0&0&0&0&0&1&0\\
0&0&0&1&0&0&0&0&0&0&0&0&1&1&0&0&0&0&0&0&1&1&0&0\\
0&0&1&0&0&0&0&0&0&1&0&0&0&1&0&0&0&0&0&0&1&0&1&0\\
0&1&0&1&0&0&0&0&0&0&1&0&0&0&1&0&0&0&0&0&0&1&0&0\\
0&0&1&1&0&0&0&0&0&0&1&1&0&0&0&0&0&0&0&0&1&0&0&0\\
0&1&0&0&0&0&0&0&0&1&0&0&1&0&0&0&0&0&0&0&0&1&1&0\\
0&0&0&0&0&1&0&1&0&0&0&0&0&0&0&0&1&0&0&1&0&0&0&1\\
0&0&0&0&0&0&0&1&1&0&0&0&0&0&0&1&0&0&1&0&0&0&0&1\\
0&0&0&0&1&0&1&0&0&0&0&0&0&0&0&0&0&0&1&1&0&0&0&1\\
0&0&0&0&1&0&0&0&1&0&0&0&0&0&0&0&1&1&0&0&0&0&0&1\\
0&0&0&0&0&1&1&0&0&0&0&0&0&0&0&1&0&1&0&0&0&0&0&1\\
0&0&0&0&0&0&0&0&0&0&1&1&0&1&0&0&0&0&0&0&0&1&1&0\\
0&0&0&0&0&0&0&0&0&0&1&0&1&0&1&0&0&0&0&0&1&0&1&0\\
0&0&0&0&0&0&0&0&0&1&0&1&0&0&1&0&0&0&0&0&1&1&0&0\\
0&0&0&0&0&0&0&0&0&0&0&0&0&0&0&1&1&1&1&1&0&0&0&0
\end{array}\!\right)
 \end{eqnarray*}
Then $\mathscr{M}_{4,1}$ is the incidence matrix of the truncated octahedral graph, which has eigenvalues $\pm3$ (multiplicity 1), $\pm\sqrt{3}$ (multiplicity $2$), $\pm1$ (multiplicity 3),  $\pm(1+\sqrt{2})$ (multiplicity 3), $\pm(\sqrt{2}-1)$(multiplicity 3); $\mathscr{M}_{4,2}$ is the incidence matrix of the disjoint of two Cayley graphs $P_{12}$, which has eigenvalues 
$5$ (multiplicity 2), $\pm\sqrt{5}$ (multiplicity 6), $-1$ (multiplicity 10).
\begin{eqnarray*}
\tiny\mathscr{M}_{4,3}\!=\!\left(\!\begin{array}{cccccccccccccccccccccccc}
0&0&0&0&0&0&0&0&0&1&1&1&1&1&1&0&0&0&0&0&0&0&0&0\\
0&0&0&0&1&0&0&1&1&0&0&0&0&0&0&1&0&1&0&1&0&0&0&0\\
0&0&0&0&0&1&1&1&0&0&0&0&0&0&0&0&1&1&1&0&0&0&0&0\\
0&0&0&0&1&0&1&0&1&0&0&0&0&0&0&1&1&0&0&1&0&0&0&0\\
0&1&0&1&0&0&0&0&0&0&0&0&0&1&1&0&0&0&0&0&1&0&1&0\\
0&0&1&0&0&0&0&0&0&1&1&0&0&1&1&0&0&0&0&0&0&1&0&0\\
0&0&1&1&0&0&0&0&0&0&0&1&1&0&0&0&0&0&0&0&0&1&1&0\\
0&1&1&0&0&0&0&0&0&0&0&1&1&0&0&0&0&0&0&0&1&1&0&0\\
0&1&0&1&0&0&0&0&0&1&1&0&0&0&0&0&0&0&0&0&1&0&1&0\\
1&0&0&0&0&1&0&0&1&0&0&0&0&0&0&0&0&0&1&1&0&0&0&1\\
1&0&0&0&0&1&0&0&1&0&0&0&0&0&0&0&0&0&1&1&0&0&0&1\\
1&0&0&0&0&0&1&1&0&0&0&0&0&0&0&0&1&1&0&0&0&0&0&1\\
1&0&0&0&0&0&1&1&0&0&0&0&0&0&0&0&1&1&0&0&0&0&0&1\\
1&0&0&0&1&1&0&0&0&0&0&0&0&0&0&1&0&0&1&0&0&0&0&1\\
1&0&0&0&1&1&0&0&0&0&0&0&0&0&0&1&0&0&1&0&0&0&0&1\\
0&1&0&1&0&0&0&0&0&0&0&0&0&1&1&0&0&0&0&0&1&0&1&0\\
0&0&1&1&0&0&0&0&0&0&0&1&1&0&0&0&0&0&0&0&0&1&1&0\\
0&1&1&0&0&0&0&0&0&0&0&1&1&0&0&0&0&0&0&0&1&1&0&0\\
0&0&1&0&0&0&0&0&0&1&1&0&0&1&1&0&0&0&0&0&0&1&0&0\\
0&1&0&1&0&0&0&0&0&1&1&0&0&0&0&0&0&0&0&0&1&0&1&0\\
0&0&0&0&1&0&0&1&1&0&0&0&0&0&0&1&0&1&0&1&0&0&0&0\\
0&0&0&0&0&1&1&1&0&0&0&0&0&0&0&0&1&1&1&0&0&0&0&0\\
0&0&0&0&1&0&1&0&1&0&0&0&0&0&0&1&1&0&0&1&0&0&0&0\\
0&0&0&0&0&0&0&0&0&1&1&1&1&1&1&0&0&0&0&0&0&0&0&0
\end{array}\!\right),&&
\tiny\mathscr{M}_{4,4}\!=\!\!\left(\!\begin{array}{cccccccccccccccccccccccc}
0&0&0&0&0&0&0&0&0&0&0&0&0&0&0&1&1&1&1&1&0&0&0&0\\
0&0&0&0&0&0&0&0&0&0&1&1&0&1&0&0&0&0&0&0&0&1&1&0\\
0&0&0&0&0&0&0&0&0&0&1&0&1&0&1&0&0&0&0&0&1&0&1&0\\
0&0&0&0&0&0&0&0&0&1&0&1&0&0&1&0&0&0&0&0&1&1&0&0\\
0&0&0&0&0&1&0&1&0&0&0&0&0&0&0&0&1&0&0&1&0&0&0&1\\
0&0&0&0&1&0&0&0&1&0&0&0&0&0&0&0&1&1&0&0&0&0&0&1\\
0&0&0&0&0&0&0&1&1&0&0&0&0&0&0&1&0&0&1&0&0&0&0&1\\
0&0&0&0&1&0&1&0&0&0&0&0&0&0&0&0&0&0&1&1&0&0&0&1\\
0&0&0&0&0&1&1&0&0&0&0&0&0&0&0&1&0&1&0&0&0&0&0&1\\
0&0&0&1&0&0&0&0&0&0&0&0&1&1&0&0&0&0&0&0&1&1&0&0\\
0&1&1&0&0&0&0&0&0&0&0&1&0&0&1&0&0&0&0&0&0&0&1&0\\
0&1&0&1&0&0&0&0&0&0&1&0&0&0&1&0&0&0&0&0&0&1&0&0\\
0&0&1&0&0&0&0&0&0&1&0&0&0&1&0&0&0&0&0&0&1&0&1&0\\
0&1&0&0&0&0&0&0&0&1&0&0&1&0&0&0&0&0&0&0&0&1&1&0\\
0&0&1&1&0&0&0&0&0&0&1&1&0&0&0&0&0&0&0&0&1&0&0&0\\
1&0&0&0&0&0&1&0&1&0&0&0&0&0&0&0&0&1&1&0&0&0&0&0\\
1&0&0&0&1&1&0&0&0&0&0&0&0&0&0&0&0&1&0&1&0&0&0&0\\
1&0&0&0&0&1&0&0&1&0&0&0&0&0&0&1&1&0&0&0&0&0&0&0\\
1&0&0&0&0&0&1&1&0&0&0&0&0&0&0&1&0&0&0&1&0&0&0&0\\
1&0&0&0&1&0&0&1&0&0&0&0&0&0&0&0&1&0&1&0&0&0&0&0\\
0&0&1&1&0&0&0&0&0&1&0&0&1&0&1&0&0&0&0&0&0&0&0&0\\
0&1&0&1&0&0&0&0&0&1&0&1&0&1&0&0&0&0&0&0&0&0&0&0\\
0&1&1&0&0&0&0&0&0&0&1&0&1&1&0&0&0&0&0&0&0&0&0&0\\
0&0&0&0&1&1&1&1&1&0&0&0&0&0&0&0&0&0&0&0&0&0&0&0
\end{array}\!\right)
\end{eqnarray*}
Thus $\mathscr{M}_{4,3}$ has eigenvalues $\pm6$ (multiplicity 1), $\pm4$ (multiplicity 2), $\pm2$ (multiplicity 1), $0$ (multiplicity 16); $\mathscr{M}_{4,4}$ is the adjacent matrix of the union of two 5-regular graphes with 24-vertices, which has eigenvalues $5$ (multiplicity 2), $\pm\sqrt{5}$ (multiplicity 6), $-1$ (multiplicity 10).
\begin{eqnarray*}
\tiny\mathscr{M}_{4,5}\!=\!\left(\!\begin{array}{cccccccccccccccccccccccc}
0&0&0&0&0&0&0&0&0&0&0&0&0&0&0&0&0&0&0&0&1&1&1&0\\
0&0&0&0&0&0&0&0&0&0&0&0&0&0&0&0&1&0&1&0&0&0&0&1\\
0&0&0&0&0&0&0&0&0&0&0&0&0&0&0&1&0&0&0&1&0&0&0&1\\
0&0&0&0&0&0&0&0&0&0&0&0&0&0&0&0&0&1&1&0&0&0&0&1\\
0&0&0&0&0&0&0&0&0&0&1&0&1&0&0&0&0&0&0&0&0&1&0&0\\
0&0&0&0&0&0&0&0&0&0&0&1&0&0&0&0&0&0&0&0&1&0&1&0\\
0&0&0&0&0&0&0&0&0&0&1&0&0&1&0&0&0&0&0&0&1&0&0&0\\
0&0&0&0&0&0&0&0&0&1&0&0&0&0&1&0&0&0&0&0&0&0&1&0\\
0&0&0&0&0&0&0&0&0&0&0&0&1&0&1&0&0&0&0&0&0&1&0&0\\
0&0&0&0&0&0&0&1&0&0&0&0&0&0&0&1&1&0&0&0&0&0&0&0\\
0&0&0&0&1&0&1&0&0&0&0&0&0&0&0&0&0&1&0&0&0&0&0&0\\
0&0&0&0&0&1&0&0&0&0&0&0&0&0&0&1&0&0&0&1&0&0&0&0\\
0&0&0&0&1&0&0&0&1&0&0&0&0&0&0&0&0&0&1&0&0&0&0&0\\
0&0&0&0&0&0&1&0&0&0&0&0&0&0&0&0&0&1&0&1&0&0&0&0\\
0&0&0&0&0&0&0&1&1&0&0&0&0&0&0&0&1&0&0&0&0&0&0&0\\
0&0&1&0&0&0&0&0&0&1&0&1&0&0&0&0&0&0&0&0&0&0&0&0\\
0&1&0&0&0&0&0&0&0&1&0&0&0&0&1&0&0&0&0&0&0&0&0&0\\
0&0&0&1&0&0&0&0&0&0&1&0&0&1&0&0&0&0&0&0&0&0&0&0\\
0&1&0&1&0&0&0&0&0&0&0&0&1&0&0&0&0&0&0&0&0&0&0&0\\
0&0&1&0&0&0&0&0&0&0&0&1&0&1&0&0&0&0&0&0&0&0&0&0\\
1&0&0&0&0&1&1&0&0&0&0&0&0&0&0&0&0&0&0&0&0&0&0&0\\
1&0&0&0&1&0&0&0&1&0&0&0&0&0&0&0&0&0&0&0&0&0&0&0\\
1&0&0&0&0&1&0&1&0&0&0&0&0&0&0&0&0&0&0&0&0&0&0&0\\
0&1&1&1&0&0&0&0&0&0&0&0&0&0&0&0&0&0&0&0&0&0&0&0
\end{array}\!\right),&&\tiny\mathscr{M}_{4,6}\!=\!
\left(\!\begin{array}{cccccccccccccccccccccccc}
0&0&0&0&0&0&0&0&0&0&0&0&0&0&0&0&0&0&0&0&0&0&0&1\\
0&0&0&0&0&0&0&0&0&0&0&0&0&0&0&0&0&0&0&0&1&0&0&0\\
0&0&0&0&0&0&0&0&0&0&0&0&0&0&0&0&0&0&0&0&0&1&0&0\\
0&0&0&0&0&0&0&0&0&0&0&0&0&0&0&0&0&0&0&0&0&0&1&0\\
0&0&0&0&0&0&0&0&0&0&0&0&0&0&0&1&0&0&0&0&0&0&0&0\\
0&0&0&0&0&0&0&0&0&0&0&0&0&0&0&0&0&0&1&0&0&0&0&0\\
0&0&0&0&0&0&0&0&0&0&0&0&0&0&0&0&1&0&0&0&0&0&0&0\\
0&0&0&0&0&0&0&0&0&0&0&0&0&0&0&0&0&1&0&0&0&0&0&0\\
0&0&0&0&0&0&0&0&0&0&0&0&0&0&0&0&0&0&0&1&0&0&0&0\\
0&0&0&0&0&0&0&0&0&0&1&0&0&0&0&0&0&0&0&0&0&0&0&0\\
0&0&0&0&0&0&0&0&0&1&0&0&0&0&0&0&0&0&0&0&0&0&0&0\\
0&0&0&0&0&0&0&0&0&0&0&0&1&0&0&0&0&0&0&0&0&0&0&0\\
0&0&0&0&0&0&0&0&0&0&0&1&0&0&0&0&0&0&0&0&0&0&0&0\\
0&0&0&0&0&0&0&0&0&0&0&0&0&0&1&0&0&0&0&0&0&0&0&0\\
0&0&0&0&0&0&0&0&0&0&0&0&0&1&0&0&0&0&0&0&0&0&0&0\\
0&0&0&0&1&0&0&0&0&0&0&0&0&0&0&0&0&0&0&0&0&0&0&0\\
0&0&0&0&0&0&1&0&0&0&0&0&0&0&0&0&0&0&0&0&0&0&0&0\\
0&0&0&0&0&0&0&1&0&0&0&0&0&0&0&0&0&0&0&0&0&0&0&0\\
0&0&0&0&0&1&0&0&0&0&0&0&0&0&0&0&0&0&0&0&0&0&0&0\\
0&0&0&0&0&0&0&0&1&0&0&0&0&0&0&0&0&0&0&0&0&0&0&0\\
0&1&0&0&0&0&0&0&0&0&0&0&0&0&0&0&0&0&0&0&0&0&0&0\\
0&0&1&0&0&0&0&0&0&0&0&0&0&0&0&0&0&0&0&0&0&0&0&0\\
0&0&0&1&0&0&0&0&0&0&0&0&0&0&0&0&0&0&0&0&0&0&0&0\\
1&0&0&0&0&0&0&0&0&0&0&0&0&0&0&0&0&0&0&0&0&0&0&0
\end{array}\!\right)
\end{eqnarray*}
Thus $\mathscr{M}_{4,5}$ is the adjacent matrix of the connected 3-regular graph with 24 vertices, which has eigenvalues $\pm3$ (multiplicity 1), $\pm\sqrt{3}$ (multiplicity $2$), $\pm1$ (multiplicity 3),  $\pm(1+\sqrt{2})$ (multiplicity 3), $\pm(\sqrt{2}-1)$ (multiplicity 3); $\mathscr{M}_{4,6}$ is the adjacent matrix of the 1-regular graph with 24 vertices, which has eigenvalues $\pm1$ (multiplicity 12).
\end{example}

Note that $\mathscr{M}_{3,i}$ ($i=1,2,3$) and $\mathscr{M}_{4,i}$ ($i=1,\ldots, 6)$ are symmetric matrices. Indeed, we can show that $\mathscr{M}^q_{n,i}$ is a $q$-symmetric matrix for $1\leq i\leq \binom{n}{2}$. To this end, we need more notations and facts.
Let $\tau$ be the canonical symmetrizing form on $H_n(q)$, that is, for $x,y\in \mathfrak{S}_n$,
\begin{equation}\label{Equ:Canonical-symm-form}
  \tau(T_xT_{y})=\left\{\begin{array}{ll}
q^{\ell(x)}, & \hbox{if $y=x^{-1}$;}\\0,&\hbox{otherwise.}
 \end{array}\right.
 \end{equation}
Then $\tau$ is a symmetric trace on  $H_n(q)$ when $q$ is invertible and $\tau(abc)=\tau(ac\theta(b))$ for $a,b,c\in H_n(q)$ (see  \cite[Lemma~2.2]{Dipper-James}). In particular, $\tau$ is $\theta$-invariant, that is, $\tau(\theta(x))=\tau(x)$ for $x\in H_n(q)$.

\begin{theorem}\label{Them:Matrix-symm}Assume that $\mathscr{M}^q_{n,i}=(a_{\alpha,\beta})_{\alpha,\beta\in\mathfrak{S}_n}$ for $i=1, \ldots, \binom{n}{2}$. Then $\mathscr{M}^q_{n,i}$ is $q$-symmetric, that is,
 \begin{equation*}
  q^{\ell(\alpha)}a_{\alpha,\beta}=q^{\ell(\beta)}a_{\beta,\alpha}
 \end{equation*}
for $\alpha,\beta\in\mathfrak{S}_n$.
In particular, $\mathscr{M}_{n,i}$ is a symmetric matrix.
\end{theorem}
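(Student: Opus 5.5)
Our plan is to express the matrix entries through the symmetrizing form $\tau$ of \eqref{Equ:Canonical-symm-form} and then use the $\theta$-invariance of $\mathscr{Y}_{n,i}$. We take the convention that the column indexed by $\beta$ records $\mathscr{Y}_{n,i}T_\beta=\sum_{\alpha}a_{\alpha,\beta}T_\alpha$; the transposed convention is handled by the same argument. The first step is to extract a single coefficient with $\tau$. By \eqref{Equ:Canonical-symm-form}, $\tau(T_\gamma T_{\alpha^{-1}})=\delta_{\gamma,\alpha}q^{\ell(\alpha)}$, so
\begin{equation*}
a_{\alpha,\beta}\,q^{\ell(\alpha)}=\tau\bigl(\mathscr{Y}_{n,i}T_\beta T_{\alpha^{-1}}\bigr).
\end{equation*}
Thus the claim $q^{\ell(\alpha)}a_{\alpha,\beta}=q^{\ell(\beta)}a_{\beta,\alpha}$ is equivalent to
\begin{equation*}
\tau\bigl(\mathscr{Y}_{n,i}T_\beta T_{\alpha^{-1}}\bigr)=\tau\bigl(\mathscr{Y}_{n,i}T_\alpha T_{\beta^{-1}}\bigr).
\end{equation*}

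The second step proves this identity. Since $\tau$ is $\theta$-invariant and $\theta$ is an anti-automorphism with $\theta(T_w)=T_{w^{-1}}$ and $\theta(\mathscr{Y}_{n,i})=\mathscr{Y}_{n,i}$, we get
\begin{equation*}
\tau\bigl(\mathscr{Y}_{n,i}T_\beta T_{\alpha^{-1}}\bigr)=\tau\bigl(\theta(T_{\alpha^{-1}})\theta(T_\beta)\theta(\mathscr{Y}_{n,i})\bigr)=\tau\bigl(T_\alpha T_{\beta^{-1}}\mathscr{Y}_{n,i}\bigr).
\end{equation*}
Because $\tau$ is a trace, $\tau(xy)=\tau(yx)$, and the right-hand side equals $\tau(\mathscr{Y}_{n,i}T_\alpha T_{\beta^{-1}})$. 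This is the required identity. Alternatively, one can apply the stated rule $\tau(abc)=\tau(ac\theta(b))$ directly.

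For the last assertion, specialize at $q=1$: the powers $q^{\ell(\alpha)}$ and $q^{\ell(\beta)}$ become $1$, so $\mathscr{M}_{n,i}$ is symmetric.

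The only delicate point is bookkeeping. We must fix the row/column convention for the left-multiplication matrix and check which side carries $q^{\ell(\alpha)}$ rather than $q^{\ell(\beta)}$; this follows from $\tau(T_xT_{x^{-1}})=q^{\ell(x)}=q^{\ell(x^{-1})}$. We also need that the trace property of $\tau$ holds over $\mathbb{C}(q)$, which the paper cites from Dipper--James.
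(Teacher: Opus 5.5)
Your argument is correct and is essentially the paper's proof. You read off $q^{\ell(\alpha)}a_{\alpha,\beta}=\tau(\mathscr{Y}_{n,i}T_\beta T_{\alpha^{-1}})$, then use the $\theta$-invariance of $\tau$ and of $\mathscr{Y}_{n,i}$ followed by the trace property to reach $\tau(\mathscr{Y}_{n,i}T_\alpha T_{\beta^{-1}})=q^{\ell(\beta)}a_{\beta,\alpha}$.

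Drop your ``alternatively'' remark, though. The rule $\tau(abc)=\tau(ac\theta(b))$ as quoted is false in general: with $a=1$ and $b=c=T_1T_2$ in $H_3(q)$, one gets $\tau(bc)=0$ but $\tau(c\theta(b))=q^2$. Even if it held, it would not turn $\tau(\mathscr{Y}_{n,i}T_\beta T_{\alpha^{-1}})$ directly into $\tau(\mathscr{Y}_{n,i}T_\alpha T_{\beta^{-1}})$. So rely only on the $\theta$-invariance plus trace chain you give.
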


\begin{proof} Given $\alpha,\beta\in \mathfrak{S}_n$, we have
\begin{eqnarray*}
\mathscr{Y}_{n,i}T_{\alpha}T_{\beta^{-1}}&=&
\sum_{y\in\mathfrak{S}_n}a_{y,\alpha}T_yT_{\beta^{-1}}.
\end{eqnarray*}
Thus
\begin{eqnarray*}
\tau(\mathscr{Y}_{n,i}T_{\alpha}T_{\beta^{-1}})&=&
a_{\beta,\alpha}\tau(T_{\beta}T_{\beta^{-1}})=q^{\ell(\beta)}a_{\beta,\alpha}.
\end{eqnarray*}
Similarly, we have
\begin{eqnarray*}
\tau(\mathscr{Y}_{n,i}T_{\beta}T_{\alpha^{-1}})&=&
a_{\alpha,\beta}\tau(T_{\alpha}T_{\alpha^{-1}})=
q^{\ell(\alpha)}a_{\alpha,\beta}.
\end{eqnarray*}
Now applying the $\theta$-invariance of $\tau$, we obtian
\begin{eqnarray*}
&&q^{\ell(\beta)}a_{\beta,\alpha}=\tau(\mathscr{Y}_{n,i}T_{\alpha}T_{\beta^{-1}})
=\tau(\theta(\mathscr{Y}_{n,i}T_{\alpha}T_{\beta^{-1}}))=
\tau(T_{\beta}T_{\alpha^{-1}}\mathscr{Y}_{n,i})=
\tau(\mathscr{Y}_{n,i}T_{\beta}T_{\alpha^{-1}})=q^{\ell(\alpha)}a_{\alpha,\beta}.
\end{eqnarray*}
It completes the proof.
\end{proof}

\begin{remark}Theorem~\ref{Them:Matrix-symm} shows $\mathscr{M}_{n,i}$ are symmetric matrices for all $i=1, \ldots, \binom{n}{2}$. It would be interesting to determine explicitly the  eigenvalues and the multiplicity of these symmetric matrices. Let $y_{n,i}$ is the number of elements of $\mathfrak{S}_n$ with length $i$ for $i=1,\ldots,\binom{n}{2}$, that is,  $y_{n,i}$ is the Mahonian number, and the Mahonian distribution $y_{n,0},y_{n,1},\ldots, y_{n,\binom{n}{2}}$ is palindromic, unimodal and log-concave. Then it is easy to see that $\mathscr{M}_{n,i}$ has eigenvalues $y_{n,i}$ and $-y_{n,i}$ when $i$ is odd and has eigenvalue $y_{n,i}$ with multiplicity 2 when $i$ is even.
 \end{remark}
 
 The end of this section is devoted to present a decomposition of $\mathscr{M}_{n,i}$. 
 
 Now let $w_0$ be the (unique) element of $\mathfrak{S}_n$ with length $\binom{n}{2}$. Then $w\mapsto w_0w$ is an involution on $\mathfrak{S}_n$, which induces a bijective between $Y_{n,i}$ and $Y_{n,\binom{n}{2}-i}$ for $i=0, 1,\ldots, \binom{n}{2}$.  So there is a order ``$\prec$" on the elements of $\mathfrak{S}_n$ such that the matrix, which is also denoted as $\mathscr{M}_{n,\binom{n}{2}}$, corresponding to the operator of the left multiplication by $\mathscr{Y}_{n,\binom{n}{2}}$ in $\mathfrak{S}_n$ with respect to $\prec$ is the direct sum of  $\left(\begin{array}{cc}0&1\\1&0
  \end{array}\right)$ with multiplicity $\frac{n!}{2}$. Thus there exists a orthogonal matrix $\boldsymbol{U}$ such that
  \begin{equation*}
  \boldsymbol{U}^{\mathrm{T}}
  \mathscr{M}_{n,\binom{n}{2}}\boldsymbol{U}=\mathrm{diag}
  (\mathrm{Id}_{\frac{n!}{2}},-\mathrm{Id}_{\frac{n!}{2}}).
  \end{equation*}
Since $\mathscr{Y}_{n,\binom{n}{2}-i}=w_{0}\mathscr{Y}_{n,i}
=\mathscr{Y}_{n,i}w_0$, we have 
\begin{equation*}
\mathscr{M}_{n,\binom{n}{2}-i}=\mathscr{M}_{n,\binom{n}{2}}\mathscr{M}_{n,i}
=\mathscr{M}_{n,i}\mathscr{M}_{n,\binom{n}{2}}.
\end{equation*}
Therefore, there exist symmetric matrices $\boldsymbol{A}$ and $\boldsymbol{B}$ such that 
\begin{equation*}
\boldsymbol{U}^{\mathrm{T}}\mathscr{M}_{n,i}\boldsymbol{U}=
\mathrm{diag}(\boldsymbol{A},\boldsymbol{B})
\quad\text{ and }\quad \boldsymbol{U}^{\mathrm{T}}
\mathscr{M}_{n,\binom{n}{2}-i}\boldsymbol{U}=\mathrm{diag}
(\boldsymbol{A},-\boldsymbol{B}).
\end{equation*}
Thus the eigenvalues of $\mathscr{M}_{n,i}$ and $\mathscr{M}_{n,\binom{n}{2}-i}$ can be mutually determined for $i=1, \ldots, \binom{n}{2}$.

\section{Representation theory of the operators}
Clearly, $\mathscr{Y}_{n,i}$ acts not just on $\mathbb{C}\mathfrak{S}_n$, but on any of its modules, i.e., on any representations of $\mathfrak{S}_n$. Thus we can consider the eigenvalues of the $\mathscr{Y}_{n,i}$-action on the Specht modules, and so the eigenvalues of $\mathscr{M}_{n,i}$ can be determined explicitly by the $\mathscr{Y}_{n,i}$-action on the Specht modules. 

Recall that a \textit{partition} $\lambda=(\lambda_1, \lambda_2,\ldots)$ of $n$, denote $\lambda\vdash n$,  is a  weakly decreasing sequence of positive integers such that $|\lambda|=\sum_{i}|\lambda_i|$, and we write $\ell(\lambda)$ the length of $\lambda$, i.e. the number of parts of $\lambda$. The \textit{(Young) diagram} of a partition $\lambda\vdash n$ may be formally defined as the set of points $(i,j)\in \mathbb{Z}^2$ such that $1\leq j\leq \lambda_i$. In drawing such diagrams we shall adopt the convention that the first coordinate $i$ (the \textit{row index}) increasing as one goes downwards, and the second coordinate $j$ (the \textit{column index}) increasing as one goes from left to right. For example, the diagram of the partition $\lambda=(3,2,2,1)$ is
\begin{equation*}
\diagram(&&\cr&\cr&\cr)
\end{equation*}
We shall usually denote the diagram of a partition $\lambda$ by the same symbol $\lambda$.

For $\lambda\vdash n$, a $\lambda$-tableau $\mathfrak{t}$ is obtained from $\lambda$ by inserting the number $1,2,\ldots,n$ into its boxes with each integer occurring exactly once.  We write $\mathrm{shape}(\mathfrak{t})=\lambda$ when $\mathfrak{t}$ is a $\lambda$-tableau. A $\lambda$-tableau $\mathfrak{t}$ is {\it row} (resp. {\it column}) {\it standard} if the entries of $\mathfrak{t}$ are strictly increasing in each row (resp. column).  A $\lambda$-tableau is {\it  standard} if it is both row standard and column standard and we denote by $\mathrm{std}(\lambda)$ the set of all standard $\lambda$-tableaux. For example, let $\mathfrak{t}^{\lambda}$ (resp. $\mathfrak{t}_{\lambda}$) be the
$\lambda$-tableau with the numbers $1,2,\dots,n$ entered in
order first along the rows (resp. columns) of the boxes of $\lambda$. Then $\mathfrak{t}^{\lambda}$ and $\mathfrak{t}_{\lambda}$ are standard $\bgl$-tableaux:
\begin{equation*}
\mathfrak{t}^{\lambda}=\diagram(1&2&3\cr4&5\cr6&7\cr8),\qquad
\mathfrak{t}_{\lambda}=\diagram(1&5&8\cr2&6\cr3&7\cr4), \qquad
\mathfrak{a}=\diagram(2&3&4\cr1&8\cr6&7\cr5),\qquad \mathfrak{b}=\diagram(2&3&1\cr4&5\cr6&7\cr8),\qquad
\mathfrak{c}=\diagram(1&4&5\cr2&6\cr3&8\cr7),
\end{equation*}
where $\mathfrak{a}$ (resp. $\mathfrak{b}$) is row (resp. column) standard $\lambda$-tableau and $\mathfrak{c}$ is $\lambda$-standard tableau.

Note that $\mathfrak{S}_n$ acts naturally on the $\lambda$-tableaux by permute its entries.
We say that $\lambda$-tableaux $\mathfrak{a}$ and $\mathfrak{b}$ are \textit{row equivalent} if the numbers in the $i$-th rows of $\mathfrak{a}$ and $\mathfrak{b}$ are same for each $i=1, \ldots, \ell(\lambda)$. We denote by $\bar{\mathfrak{a}}$ the row equivalent class represented by $\mathfrak{a}$, which is called a \textit{tabloid}. For a $\lambda$-tableau $\mathfrak{t}$, denote by $\mathcal{C}_{\mathfrak{t}}$ the column stabilizer of $\mathfrak{t}$, that is,
\begin{equation*}
  \mathcal{C}_{\mathfrak{t}}:=\{\sigma\in \mathfrak{S}_n|\text{ the numbers in each column of $\mathfrak{t}$ is $\sigma$-invariant}\}.
\end{equation*}

\begin{definition}Let $\lambda$ be a partition of $n$ and $\mathfrak{t}$ a $\lambda$-tableau. The \textit{polytabloid} associated to $\mathfrak{t}$ is defined as
\begin{equation*}
  \varepsilon_{\mathfrak{t}}:=\sum_{\sigma\in
  \mathcal{C}_{\mathfrak{t}}}\mathrm{sgn}(\sigma)
  \overline{\sigma(\mathfrak{t})}.
\end{equation*}
The Specht module $S^{\lambda}$ associated to $\lambda\vdash n$ is defined as
\begin{equation*}
  S^{\lambda}:=\mathrm{span}_{\mathbb{C}}
  \{\varepsilon_{\mathfrak{t}}|\mathfrak{t}\in\mathrm{std}(\lambda)\}.
\end{equation*}
\end{definition}

It is well-known that $\{\varepsilon_{\mathfrak{t}}|\mathfrak{t}\in\mathrm{std}(\lambda)\}$ is a $\mathbb{C}$-basis of $S^{\lambda}$, which is called the standard basis of $S^{\lambda}$. Indeed, this is true for any field $\mathbb{F}$. Moreover, if the characteristic of $\mathbb{F}$ is zero then $S^{\lambda}$ is irreducible, and $\{S^{\lambda}|\lambda\vdash n\}$ is the complete set of non-isomorphic irreducible representations of $\mathbb{C}\mathfrak{S}_n$ and $H_n(q)$, that is, one can
decompose $\mathbb{C}\mathfrak{S}_n$:
\begin{equation}\label{Equ:Sn-decom-module}
 \mathbb{C}\mathfrak{S}_n\cong \bigoplus_{\lambda\vdash n}
 h_{\lambda}S^{\lambda}\cong \bigoplus_{ \mathfrak{t}\text{ a standard tableau}} S^{\mathrm{shape}(\mathfrak{t})},
\end{equation}
where $h_{\lambda}$ is the number of standard $\lambda$-tableaux, which is given by the Frame--Robinson--Thrall formula (see \cite{Frame-RT}), and $\mathrm{shape}(\mathfrak{t})=\lambda$ when $\mathfrak{t}$ is a $\lambda$-standard tableau.
Hence, there is a bijection between copies of simple modules and
standard tableaux, and one can associate each
standard tableau with a copy of a Specht module. Moreover, since Specht modules are simple, Schur's lemma shows that their morphisms (shufflings, for example) have a single eigenvalue when restricted to a Specht module.

Note that $\mathscr{Y}_{n,i}$ ($i=1,\ldots, \binom{n}{2}$)
act not just on $\mathbb{C}\mathfrak{S}_n$, but on any of its modules, i.e., on any representation of $\mathfrak{S}_n$. Thus the question about eigenvalues can be asked for each
representation of $\mathfrak{S}_n$, in particular for the Specht modules, which may help us to understand the eigenvalues of $\mathscr{M}_{n,i}$.

\begin{example}\label{Exam:n=3-Specht}
For $\lambda=(2,1)\vdash3$, we have
  \begin{eqnarray*}
  &&\mathrm{std}(\lambda)=\left\{\mathfrak{a}={\diagram(1&2\cr3)}, \quad \mathfrak{b}={\diagram(1&3\cr2)}\right\};\\
  &&\mathcal{C}_{\mathfrak{a}}=\{1, s_1s_2s_1\}, \qquad \varepsilon_{\mathfrak{a}}=\bar{\mathfrak{a}}-s_1s_2s_1\bar{\mathfrak{a}}
  =(1\,2|3)-(2\,3|1);\\
  &&\mathcal{C}_{\mathfrak{b}}=\{1, s_1\},\quad \varepsilon_{\mathfrak{b}}=\bar{\mathfrak{b}}-s_1\bar{\mathfrak{b}}
  =(1\,3|2)-(2\,3|1),
  \end{eqnarray*}
 where $(a\,b|c)$ means the set of the numbers in the first (resp. second) row is $\{a,b\}$ (resp. $\{c\}$). Thus
 \begin{equation*}
   S^{(2,1)}=\mathrm{span}_{\mathbb{C}}\{\varepsilon_{\mathfrak{a}},
   \varepsilon_{\mathfrak{b}}\}.
 \end{equation*}
 Furthermore, calculations show
 \begin{eqnarray*}
  \mathscr{Y}_{3,1}(\varepsilon_{\mathfrak{a}},\varepsilon_{\mathfrak{b}})
  &=&(\varepsilon_{\mathfrak{a}},\varepsilon_{\mathfrak{b}})
  \left(\begin{array}{cc}1&1\\&-1
  \end{array}\right);\\
\mathscr{Y}_{3,2}(\varepsilon_{\mathfrak{a}},\varepsilon_{\mathfrak{b}})
  &=&(\varepsilon_{\mathfrak{a}},\varepsilon_{\mathfrak{b}})
  \mathrm{diag}(-1,-1);\\
  \mathscr{Y}_{3,3}(\varepsilon_{\mathfrak{a}},\varepsilon_{\mathfrak{b}})
  &=&(\varepsilon_{\mathfrak{a}},\varepsilon_{\mathfrak{b}})
 \left(\begin{array}{cc}-1&-1\\&1
  \end{array}\right).\end{eqnarray*}
Finally, Equ.~\eqref{Equ:Sn-decom-module} shows
\begin{equation*}
  \mathbb{C}\mathfrak{S}_3\cong S^{(3)}\oplus 2S^{(2,1)}\oplus S^{(1^{3})}.
\end{equation*}
As a consequence, the eigenvalues of $\mathscr{M}_{3,1}$ are $2,1^2, (-1)^2,-2,$; the eigenvalues of $\mathscr{M}_{3,2}$ are $2^2,(-1)^4$; the eigenvalues of $\mathscr{M}_{3,3}$ are $1^3, (-1)^3$, which recovers the assertions in Example~\ref{Exam:n=3-eigenvalues}.
\end{example}

\begin{example}\label{Exam:n=4-Specht}Now we consider $\mathfrak{S}_4$. Firstly,  for $(3,1)\vdash 4$, we have  \begin{eqnarray*}
  &&\mathrm{std}((3,1))=\left\{\mathfrak{a}_1={\diagram(1&2&3\cr4)},\quad \mathfrak{a}_2={\diagram(1&2&4\cr3)}, \quad \mathfrak{a}_3={\diagram(1&3&4\cr2)}\right\};\end{eqnarray*}
\begin{eqnarray*}
 && \mathcal{C}_{\mathfrak{a}_1}=\{1, (1\,4)\},\quad \varepsilon_{\mathfrak{a}_1}=(1\,2\,3|4)-(2\,3\,4|1);\\
 && \mathcal{C}_{\mathfrak{a}_2}=\{1, (1\,3)\},\quad \varepsilon_{\mathfrak{a}_2}=(1\,2\,4|3)-(2\,3\,4|1);\\
&& \mathcal{C}_{\mathfrak{a}_3}=\{1, (1\,2)\}, \quad \varepsilon_{\mathfrak{a}_3}=(1\,3\,4|2)-(2\,3\,4|1).
\end{eqnarray*}
Thus
\begin{eqnarray*}
 \mathscr{Y}_{4,1}(\varepsilon_{\mathfrak{a}_1}, \varepsilon_{\mathfrak{a}_2},\varepsilon_{\mathfrak{a}_3})=
 (\varepsilon_{\mathfrak{a}_1}, \varepsilon_{\mathfrak{a}_2},\varepsilon_{\mathfrak{a}_3})
 \left(\begin{array}{ccc}
  2&1&0\\1&1&1\\-1&0&0
 \end{array}\right),
  \end{eqnarray*}
which has eigenvalues $1$, $1+\sqrt{2}$, $1-\sqrt{2}$.

Secondly, for $(2^2)\vdash 4$, we have
  \begin{eqnarray*}
  &&\mathrm{std}((2^2))=\left\{\mathfrak{b}_1={\diagram(1&2\cr3&4)},\quad \mathfrak{b}_2={\diagram(1&3\cr 2&4)}\,\right\};\end{eqnarray*}
 \begin{eqnarray*} &&\mathcal{C}_{\mathfrak{b}_1}=\{1, (1\,3), (2\,4), (1\,3)(2\,4)\},\quad \varepsilon_{\mathfrak{b}_1}=(1\,2|3\,4)-(2\,3|1\,4)-(1\,4|2\,3)+(3\,4|1\,2);\\
  &&\mathcal{C}_{\mathfrak{b}_2}=\{1, (1\,2), (3\,4), (1\,2)(3\,4)\},\quad \varepsilon_{\mathfrak{b}_2}=(1\,3|2\,4)-(2\,3|1\,4)-
(1\,4|2\,3)+(2\,4|1\,3).
\end{eqnarray*}
Thus
  \begin{eqnarray*}
\mathscr{Y}_{4,1}(\varepsilon_{\mathfrak{b}_1}, \varepsilon_{\mathfrak{b}_2})=(\varepsilon_{\mathfrak{b}_1}, \varepsilon_{\mathfrak{b}_2})\left(\begin{array}{cc}
    2&1\\-1&-2
  \end{array}\right),
  \end{eqnarray*}
which has eigenvalues $\pm\sqrt{3}$.

Finally, for $(2,1^2)\vdash 4$, we have
\begin{eqnarray*}
  &&\mathrm{std}((2,1^2))=\left\{\mathfrak{c}_1={\diagram(1&2\cr3\cr4)},
\quad \mathfrak{c}_2={\diagram(1&3\cr2\cr4)}, \quad \mathfrak{c}_3={\diagram(1&4\cr2\cr3)}\right\};
\end{eqnarray*}
\begin{eqnarray*}  &&\mathcal{C}_{\mathfrak{c}_1}=\mathfrak{S}_{\{1,3,4\}}=\{(1), (1\,3), (1\,4), (3\,4), (1\,3\,4), (1\,4\,3)\},\\&&
\varepsilon_{\mathfrak{c}_1}=(1\,2|3|4)-(2\,3|1|4)-(2\,4|3|1)-(1\,2|4|3)
+(2\,3|4|1)+(2\,4|1|3);\\
&& \mathcal{C}_{\mathfrak{c}_2}=\mathfrak{S}_{\{1,2,4\}}=\{(1), (1\,2), (1,4), (2\,4), (1\,2\,4), (1\,4\,2)\},\\&&
\varepsilon_{\mathfrak{c}_2}=(1\,3|2|4)-(2\,3|1|4)-(3\,4|2|1)-(1\,3|4|2)
+(2\,3|4|1)+(3\,4|1|2);\\
&&\mathcal{C}_{\mathfrak{c}_3}=\mathfrak{S}_{\{1,2,3\}}=\{(1), (1\,2), (1\,3), (2\,3), (1\,2\,3), (1\,3\,2)\}\\&&
\varepsilon_{\mathfrak{c}_3}=(1\,4|2|3)-(2\,4|1|3)-(3\,4|2|1)-(1\,4|3|2)
+(2\,4|3|1)+(3\,4|1|2).
  \end{eqnarray*}
Thus \begin{eqnarray*}
 \mathscr{Y}_{4,1}(\varepsilon_{\mathfrak{c}_1}, \varepsilon_{\mathfrak{c}_2},\varepsilon_{\mathfrak{c}_3})=
 (\varepsilon_{\mathfrak{c}_1}, \varepsilon_{\mathfrak{c}_2},\varepsilon_{\mathfrak{c}_3})
 \left(\begin{array}{ccc}
  0&1&0\\0&-1&1\\1&1&-2
 \end{array}\right),
  \end{eqnarray*}
which has eigenvalues $-1, -1+\sqrt{2}, -1-\sqrt{2}$.

Now applying Equ.~\eqref{Equ:Sn-decom-module}, we have
\begin{equation*}
  \mathbb{C}\mathfrak{S}_4\cong S^{(4)}\oplus 3S^{(3,1)}\oplus 2S^{2^2}\oplus 3S^{(2,1^2)}\oplus S^{(1^{3})},
\end{equation*}
which helps us to obtain the eigenvalues $\mathscr{M}_{4,1}$ via the $\mathscr{Y}_{4,1}$-action on the Specht modules.
\end{example}

Let us remark that the eigenvalues of $\mathscr{M}_{n,i}$ can be obtained via the eigenvalues of the $\mathscr{Y}_{n,i}$-action on the Specht modules. Moreover, the eigenvalues of $\mathscr{M}^q_{n,i}$ can be obtained via the eigenvalues of the $\mathscr{Y}_{n,i}$-action on the Specht modules.

\end{CJK*}
\end{document}